\newtheorem{theorem}{Theorem}[section]
\newtheorem{proposition}[theorem]{Proposition}
\newtheorem{corollary}[theorem]{Corollary}
\newtheorem{lemma}[theorem]{Lemma}
\theoremstyle{definition}
\newtheorem{definition}{Definition}[section]
\newtheorem{example}{Example}
\newtheorem*{examples}{Examples}
\newtheorem{remark}{Remark}[section]
\newcommand{\im}{\operatorname{Im}}
\newcommand{\YB}{\operatorname{YB}}
\newcommand{\TYB}{\operatorname{TYB}}
\newcommand{\TD}{\operatorname{TD}}
\newcommand{\TBQ}{\operatorname{TBQ}}
\newcommand{\T}{\operatorname{T}}
\newcommand{\Hom}{\operatorname{Hom}}
\newcommand{\id}{\mathrm{id}}
\newcommand{\overbar}[1]{\mkern 1.5mu\overline{\mkern-1.5mu#1\mkern-1.5mu}\mkern 1.5mu}
\begin{document}
\title{Twisted Yang-Baxter sets, cohomology theory, and application to knots}
\author{Mohamed Elhamdadi}
\author{Manpreet Singh}
\address{University of South Florida \\
Tampa, FL, 33620}
\email{emohamed@usf.edu}
\email{manpreet.singh2@fulbrightmail.org\\ manpreet.math23@gmail.com}

\subjclass[2020]{57K10, 57K45, 57K12, 16T25, 57T99}
\keywords{Twisted Yang-Baxter sets, cohomology, twisted biquandles, cocycle knot invariants}

\begin{abstract}
We introduce twisted set-theoretic Yang-Baxter solutions and develop an associated cohomology theory, which extends the standard cohomology theory of Yang-Baxter solutions. By employing cocycles of twisted biquandles along with Alexander numbering, we construct state-sum invariants for knots and knotted surfaces. As an application, we use our approach to distinguish the $2$-twist spun trefoil from its reverse orientation, in line with prior findings.
\end{abstract}

\maketitle
\section{Introduction}

The Yang-Baxter equation, independently discovered by Yang \cite{MR261870} and Baxter \cite{MR290733}, has been pivotal in the fields of quantum groups, braided categories, and knot theory. A Yang-Baxter set consists of a pair $(X,R)$, where $X$ is a set and $R: X \times X \to X \times X$ is an invertible map that satisfies:
\[
(R \times \id_X ) (\id_X \times R) (R \times \id_X)= (\id_X \times R) (R \times \id_X) (\id_X \times R)
\]
with $\id_X: X \to X$ being the identity map in $X$.
Solutions to the Yang-Baxter equation enable the construction of knot invariants \cite{MR939474, MR990215}, such as the Jones polynomial \cite{MR908150}. Algebraic structures like racks, quandles \cite{MR638121, MR672410}, and biquandles \cite{MR1973514, MR1721925} also serve as solutions to the Yang-Baxter equation.

In \cite{MR1990571}, a comprehensive homological theory of quandles was established, utilizing quandle cocycles to create state-sum invariants for codimension $2$-embeddings. Advancing this work, \cite{MR2128041} develops a homology theory for the Yang-Baxter sets through the application of Yang-Baxter coloring of cubical complexes, and invariants of knots are defined by use of biquandle cocycles. Additionally, \cite{MR3381331} proposes a graphical approach to define a homology theory for Yang-Baxter sets. Remarkably, \cite{MR3835755} proves that both the cubical complex and graphical approaches give rise to the same homology theory.

In \cite{MR1990571}, a quandle cocycle was pivotal in proving that the $2$-twist spun trefoil knot is non-invertible. The remarkable potential of quandle cocycle invariants is further highlighted in \cite{MR1984465}, where they were used to determine the exact triple point number of the $2$-twist spun trefoil knot, which is four. Additionally, \cite{MR2119028} proves the existence of infinitely many non-invertible knotted surfaces by employing quandle cocycle invariants associated with Mochizuki’s $3$-cocycles \cite{MR1960136}. These findings underscore the profound implications of quandle cocycle invariants in knot theory.

In this article, we introduce a notion of twisted set-theoretic Yang-Baxter solution. A twisted set-theoretic Yang-Baxter solution consists of a triplet $(X,f,R)$, where $(X,R)$ denotes a Yang-Baxter set and $f:X \to X$ is an automorphism of $(X,R)$. We introduce a (co)homology theory for this concept using a graphical approach. Subsequently, we apply this theory to define cohomology for twisted biquandles. We then employ cocycles of twisted biquandles, along with Alexander numbering, to formulate a state-sum invariant for knots and knotted surfaces. We compute a state-sum invariant of $2$-twist spun trefoil and of its reverse orientation.

Throughout the paper we refer to knots and links by the generic term “knots”.

The article is organized as follows.  In Section~\ref{prelim} we provide an overview of the Yang-Baxter sets and Yang-Baxter homology theory, focusing on the graphical approach \cite{MR3381331}.  Section~\ref{sec:twisted_yang_baxter_solutions} introduces {\it twisted set-theoretic Yang-Baxter solutions}. In Section \ref{sec:homology_theory_for_twisted_yang_baxter_set} we introduce a (co)homology theory for twisted Yang-Baxter sets and twisted biquandles. Section \ref{sec:Calcualtions_of_homology} discusses the computation of second homology groups for certain twisted quandles, highlighting differences from their counterparts in quandles. Section \ref{sec:extensions} delves into the extension theory of twisted Yang-Baxter sets using cocycles. In Section \ref{sec:twisted_biquandle_cocycle_invariants_of_classical_knots}, we introduce a state-sum knot invariant by utilizing $2$-cocycles for twisted biquandles alongside Alexander numbering of knots. A similar approach is taken in Section \ref{sec:twisted_biquandle_cocycle_invariants_of_knotted_surfaces} to define state-sum invariants for knotted surfaces. Section \ref{sec:computations_using_braid_charts} describes a method for computing state-sum invariants for knotted surfaces using braid charts. In Section \ref{sec:concluding_remarks}, we note that one can analogously define state sum invariants for knots on compact oriented surfaces and broken surface diagrams in orientable compact $3$-manifolds.

\section{Preliminary}\label{prelim}

Let $X$ be a non-empty set and $R: X \times X \to X \times X$ an invertible map, satisfying the following {\it set-theoretic Yang-Baxter equation:}
\[
 (R\times \id_X) (\id_X \times R) (R \times \id_X)=(\id_X \times R) (R \times \id_X) (\id_X \times R),
\]
where $\id_X$ denotes the identity map on $X$. Then $R$ is termed a {\it set-theoretic Yang-Baxter operator}, and the pair $(X,R)$ is said to be a {\it set-theoretic Yang-Baxter solution}. Sometimes, for brevity, we call $(X,R)$ a {\it Yang-Baxter set}.

\par

We denote the components of $R$ by $R_1$ and $R_2$, that is, $R(x_1,x_2)=\big( R_1(x_1, x_2), R_2(x_1, x_2) \big)$ for $x_1, x_2 \in X$. Moreover, $\overbar{R}$ denotes the inverse of $R$, and $\overbar{R_1}$ and $\overbar{R_2}$ are its components.

\par

A Yang-Baxter set $(X,R)$ is called a {\it birack} if
\begin{enumerate}
\item The map $R_1$ is {\it left-invertible}, that is, for any $x,z \in X$, there is a unique $y \in X$ such that $R_1(x,y)= z$.
\item The map $R_2$ is {\it right-invertible}, that is, for any $y,w \in X$, there is unique $x \in X$ such that $R_2(x,y)=w$.
\end{enumerate}
\par 

A {\it biquandle} is a birack $(X,R)$ satisfying the {\it type I condition}, that is, for a given $a \in X$ there exists a unique $x \in X$ such that $R(x,a)=(x,a)$. Note that the above condition is equivalent to the following: given an element $a \in X$, there exists a unique $x \in X$ such that $R(a,x)=(a,x)$. For more details, see \cite{MR2398735,MR2100870} and \cite[Remark 3.3]{MR2128041}.

\begin{examples}
\begin{enumerate}
\item Let $(X,*)$ be a quandle. Define $R : X \times X \to X \times X$ as $R(x,y)=(y, x*y)$. Then $(X,R)$ is a biquandle.
\item Let $\mathbb{Z}_n$ be a cyclic group of order $n$. Define $R: \mathbb{Z}_n \times \mathbb{Z}_n \to \mathbb{Z}_n \times \mathbb{Z}_n$ as $R(x,y)=(y+1, x-1)$. Then $(\mathbb{Z}_n, R)$ is a biquandle.
\item Let $G$ be a group. Define $R, R': G \times G \to G \times G$ as $R(x,y)=(y^{-1}, yxy)$ and $R'(x,y)=(x^{-1}y^{-1}x, y^2x)$. Then $(G, R)$ and $(G, R')$ are biquandles, called {\it Wada biquandles}.
\item Let $K$ be a commutative ring with unit $1$. Let $\alpha, \beta$ are units in $K$ such that $(1-\alpha) (1-\beta)=0$. Then   $\displaystyle R= \left[ \begin{array}{cc} 1-\alpha & \alpha \\ \beta & 1-\beta \end{array} \right] $ makes the ring $K$ into a biquandle and is known as {\it Alexander biquandle}.
\end{enumerate}

\end{examples}

A {\it precubical set} is a graded set $P=(P_n)_{n \geq 0}$ with {\it boundary  operators} $\partial^k_i: P_n \to P_{n-1}$ $(n > 0, ~k=0,1,~i=1, \ldots, n)$ satisfying the equations $\partial^k_i \circ \partial_j^l = \partial_{j-1}^l \circ \partial_i^k$ for $k,l =0,1,$ and $ i <j$.

There are two approaches for homology theory of the set-theoretic Yang-Baxter sets: one is algebraic \cite{MR2128041}, and the other is graphical \cite{MR3381331}. Both of them give the the same theory \cite{MR3835755}. For our purpose, we are recalling the graphical approach.

Let $(X,R)$ be a Yang-Baxter set. Let $C_n^{\YB}(X)$ be the free abelian group generated by the elements in $X^n$. For each $n \geq 1$, define a homomorphism
\begin{align*}
\partial_n \colon &C_n^{\YB} \to C_{n-1}^{\YB}(X)~\textrm{as}\\
\partial_n = &\sum_{i=1}^{n} (-1)^i \partial_{i,n},
\end{align*}
where $\partial_{i,n}= \partial^l_{i,n} - \partial^r_{i,n}$. Graphically, the maps $\partial_{i,n}^l$, $\partial_{i,n}^r$ and $\partial_{i,n}$ are illustrated in Figure \ref{fig:original_left_map}, Figure \ref{fig:original_right_map} and Figure \ref{fig:original_face_maps}, respectively.

It can be verified that the graded set $(X^n)_{n \geq0 }$ with boundary maps $\partial^{\epsilon}_{i,n}$ where $\epsilon=l,r$, forms a precubical set. Consequently, $(C_*^{\YB}(X), \partial_n)$ constitutes a chain complex. One can now proceed in the standard manner to define homology and cohomology groups.
\begin{figure}[H]
\begin{center}

\includegraphics[height=3in,width=6in,angle=00]{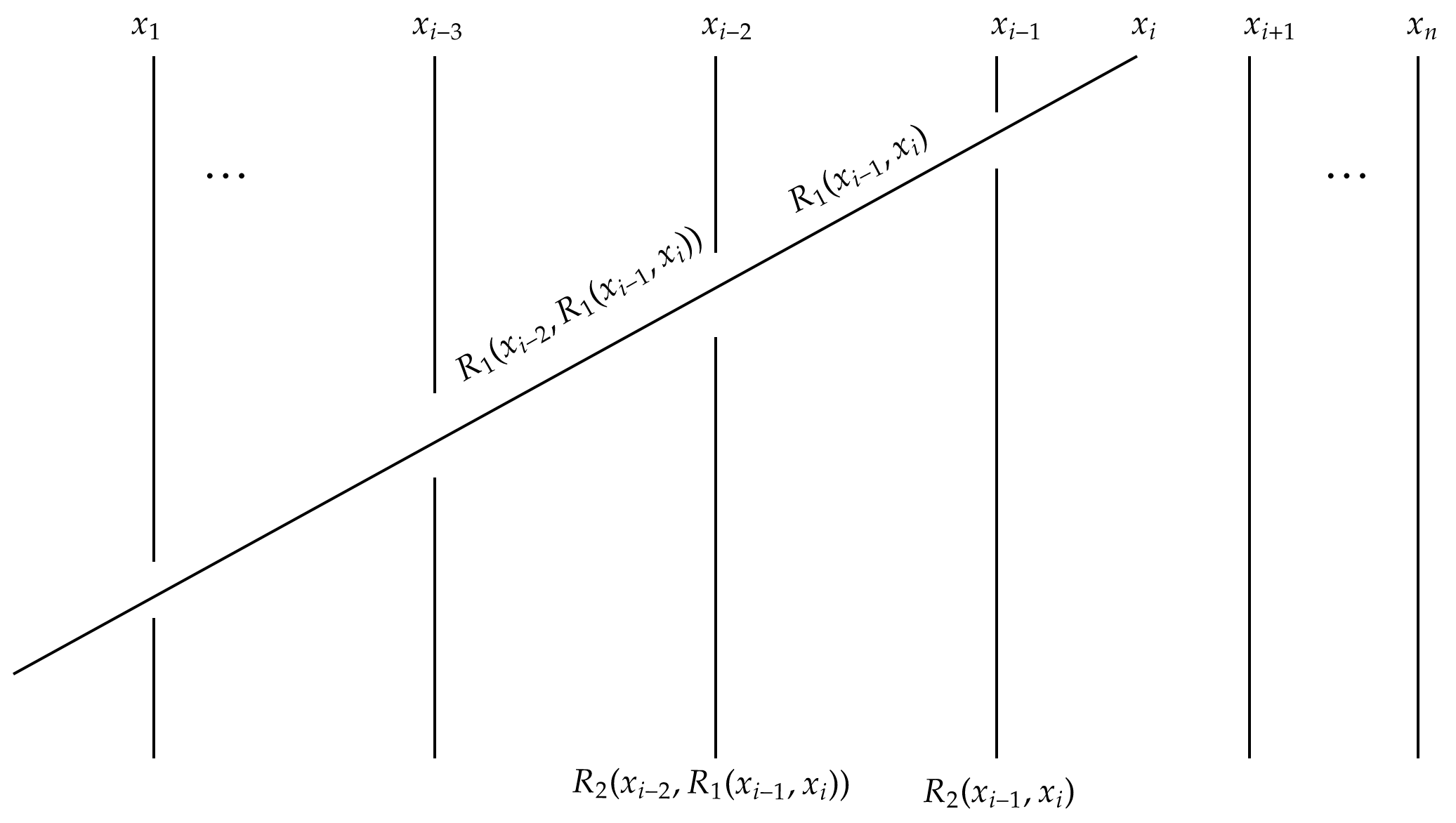}
\end{center}

\caption{The face map $\partial^l_{i,n}.$}
\label{fig:original_left_map}

\end{figure}

\begin{figure}[H]

\begin{center}

\includegraphics[height=3in,width=5in,angle=00]{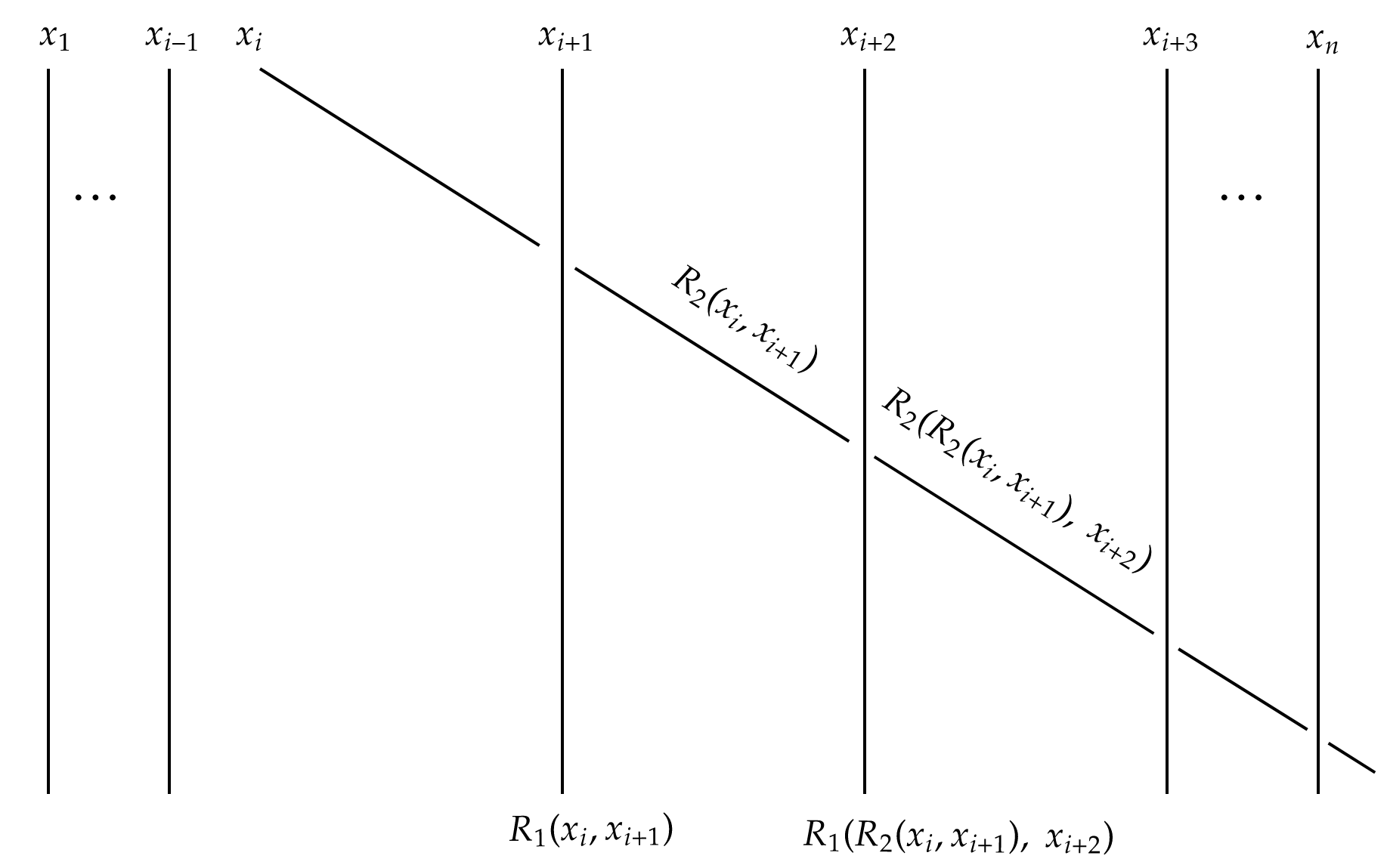}
\end{center}

\caption{The face map $\partial^r_{i,n}.$}
\label{fig:original_right_map}

\end{figure}

\begin{figure}[H]
\begin{center}

\includegraphics[height=2.0in,width=5.0in,angle=00]{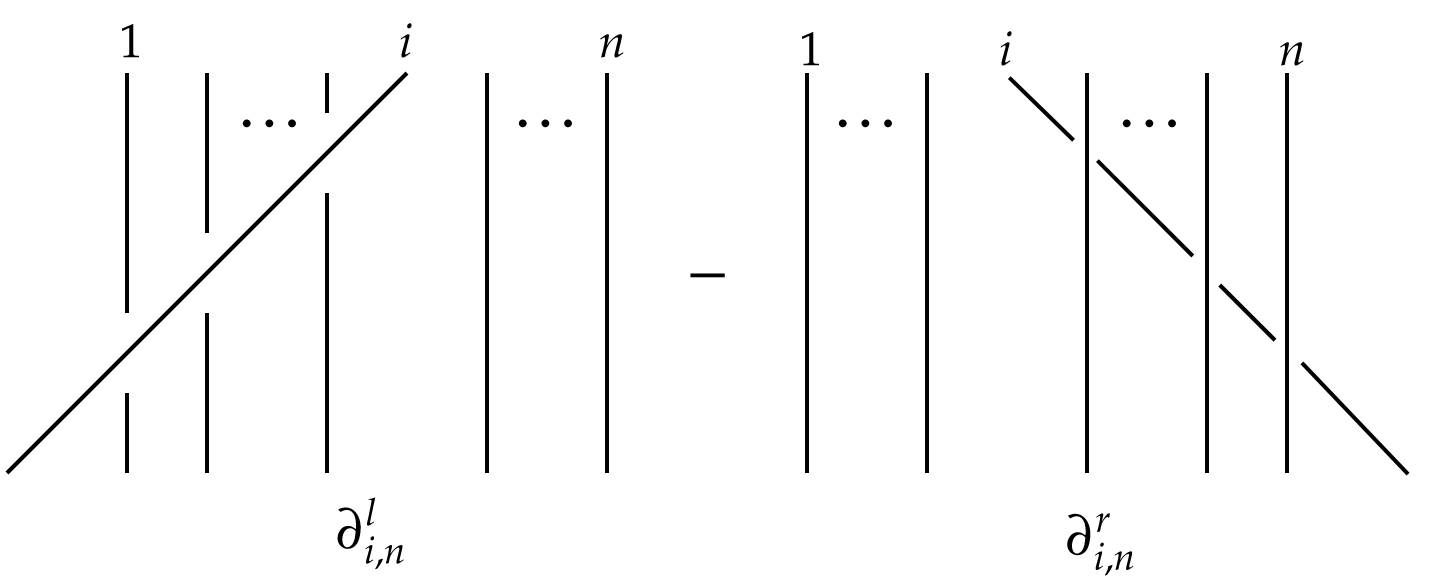}
\end{center}

\caption{The map $\partial_{i,n}.$}
\label{fig:original_face_maps}

\end{figure}


\section{Twisted set-theoretic Yang-Baxter solutions}\label{sec:twisted_yang_baxter_solutions}
In this section, we introduce {\it twisted set-theoretic Yang-Baxter solutions}.

\begin{definition}
A {\it twisted set-theoretic Yang-Baxter solution} is a triplet $(X,f,R)$, where $(X,R)$ is a Yang-Baxter set and $f \colon X \to X$ an automorphism of $(X,R)$. This means that $f $ is a bijective map such that the following diagram commutes:

\begin{center}
\begin{tikzcd}
X \times X \arrow[r, "R"] \arrow[d, "f \times f"'] & X \times X \arrow[d, "f \times f"] \\
X \times X \arrow[r, "R"']                         & X \times X                        
\end{tikzcd}
\end{center}

\end{definition}
For brevity, we refer to $(X,f,R)$ a {\it twisted Yang-Baxter set.}

Note that if $(X,f,R)$ is a twisted Yang-Baxter set, then the following diagrams commute:

\begin{center}
\begin{tikzcd}
X \times X \arrow[r, "R"] \arrow[d, "f^{-1} \times f^{-1}"'] & X \times X \arrow[d, "f^{-1} \times f^{-1}"] &                                                                    &                                              & X \times X \arrow[r, "\bar{R}"] \arrow[d, "f \times f"'] & X \times X \arrow[d, "f \times f"] \\
X \times X \arrow[r, "R"']                                   & X \times X                                   &                                                                    &                                              & X \times X \arrow[r, "\bar{R}"']                         & X \times X                         \\
                                                             &                                              & X \times X \arrow[r, "\bar{R}"] \arrow[d, "f^{-1} \times f^{-1}"'] & X \times X \arrow[d, "f^{-1} \times f^{-1}"] &                                                          &                                    \\
                                                             &                                              & X \times X \arrow[r, "\bar{R}"']                                   & X \times X                                   &                                                          &                                   
\end{tikzcd}
\end{center}

A {\it twisted Yang-Baxter set homomorphism} of $(X,f,R)$ to $(Y,f',R')$ is defined to be a map $\phi: (X,f,R) \to (Y,f', R')$ such that the following diagrams commute:
\begin{center}
\begin{tikzcd}
X\times X \arrow[rr, "R"] \arrow[d, "\phi \times \phi"'] &  & X \times X \arrow[d, "\phi \times \phi"] &  & X \arrow[r, "f"] \arrow[d, "\phi"'] & X \arrow[d, "\phi"] \\
Y \times Y \arrow[rr, "R'"']                             &  &      Y \times Y                          &  & Y \arrow[r, "f'"']                  & Y                  
\end{tikzcd}
\end{center}

A {\it twisted birack} (or {\it twisted biquandle}) $(X,f,R)$ is defined as a twisted Yang-Baxter set $(X,f,R)$, where $(X,R)$ is a birack (or biquandle). In the study of virtual knots (see \cite{MR2191942}), $(X,f,R)$ is called a {\it virtual birack} (or {\it virtual biquandle}).

\par 
Let $(X,f,R)$ be a twisted Yang-Baxter set, where $R=(R_1, R_2)$. Define a map \[\mathcal{T}R : X \times X \to X \times X,\] as
\[
\mathcal{T}R(x,y)= (\mathcal{T}R_1(x,y), \mathcal{T}R_2(x,y)):=(R_1(x,f(y)), R_2(f^{-1}(x),y)).
\]
The map $\mathcal{T}R$ is invertible, with the inverse map $\overbar{\mathcal{T}R}: X \times X \to X \times X $ is 
\begin{align}
\overbar{\mathcal{T}R}(x,y)=\big( \overbar{R_1}(x,f(y)), \overbar{R_2}(f^{-1}(x),y) \big). \label{eq:TR_definition}
\end{align}

Given that $R$ satisfies the set-theoretic Yang-Baxter equation, for all $a,b,c \in X$, Equations \eqref{eq:R_III_1}, \eqref{eq:R_III_2} and \eqref{eq:R_III_3} hold, which we use in the proof of Lemma~\ref{lemma: TR_satisfy_YBE}.
\begin{align}
R_1 \big( R_1(a,b), R_1(R_2(a,b),c) \big)&= R_1 \big(a, R_1(b,c)\big) \label{eq:R_III_1}\\
R_2\big(R_1(a,b), R_1(R_2(a,b),c)\big)&=R_1\big(R_2(a,R_1(b,c)), R_2(b,c)\big) \label{eq:R_III_2}\\
R_2\big(R_2(a,b),c\big) &= R_2\big(R_2(a, R_1(b,c)), R_2(b,c)\big) \label{eq:R_III_3}
\end{align}
\begin{lemma}\label{lemma: TR_satisfy_YBE}
The map $\mathcal{T}R: X \times X \to X \times X$ satisfies the set-theoretic Yang-Baxter equation, that is,
\[
(\mathcal{T}R \times \id_X ) (\id_X \times \mathcal{T}R) (\mathcal{T}R \times \id_X) = (\id_X \times \mathcal{T}R) 
 (\mathcal{T}R \times \id_X)
(\id_X \times \mathcal{T}R)
\]
\end{lemma}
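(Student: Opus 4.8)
The plan is to avoid expanding everything into coordinates and instead exhibit $\mathcal{T}R$ as a conjugate of $R$, so that the Yang--Baxter equation for $\mathcal{T}R$ follows formally from that for $R$. The starting point is that the defining formula for $\mathcal{T}R$ can be rewritten in two different ways using only that $f\times f$ commutes with $R$. Indeed, since $R_2(f^{-1}(x),y)=f^{-1}\big(R_2(x,f(y))\big)$ and $R_1(x,f(y))=f\big(R_1(f^{-1}(x),y)\big)$, both of which drop straight out of the automorphism square, one gets
\[
\mathcal{T}R=(\id\times f^{-1})\,R\,(\id\times f)=(f\times \id)\,R\,(f^{-1}\times \id).
\]
So $\mathcal{T}R$ is simultaneously the conjugate of $R$ by $\id\times f$ (acting on the second factor) and by $f\times\id$ (acting on the first factor).

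Next I would lift this to $X\times X\times X$. Write $f_i$ for the map applying $f$ to the $i$-th tensor factor and the identity elsewhere; these commute with one another, $f_i$ commutes with any operator supported on legs disjoint from $i$, and the automorphism hypothesis says $R\times\id$ commutes with $f_1f_2$ while $\id\times R$ commutes with $f_2f_3$. Using the second form above on the left copy of $\mathcal{T}R$ and the first form on the right copy gives
\[
\mathcal{T}R\times\id=f_1\,(R\times\id)\,f_1^{-1},\qquad \id\times\mathcal{T}R=f_3^{-1}\,(\id\times R)\,f_3.
\]
The key step is to notice that both of these are conjugation by the single map $g:=f_1 f_3^{-1}$ (apply $f$ to the first factor and $f^{-1}$ to the third): since $f_3$ commutes with $R\times\id$ and $f_1$ commutes with $\id\times R$, the spurious factors pass through $R$ and cancel, yielding
\[
\mathcal{T}R\times\id=g\,(R\times\id)\,g^{-1},\qquad \id\times\mathcal{T}R=g\,(\id\times R)\,g^{-1}.
\]

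With this in hand the conclusion is purely formal: in each triple product the inner $g^{-1}g$ pairs collapse, so
\[
(\mathcal{T}R\times\id)(\id\times\mathcal{T}R)(\mathcal{T}R\times\id)=g\,(R\times\id)(\id\times R)(R\times\id)\,g^{-1},
\]
and likewise $(\id\times\mathcal{T}R)(\mathcal{T}R\times\id)(\id\times\mathcal{T}R)=g\,(\id\times R)(R\times\id)(\id\times R)\,g^{-1}$. Since $R$ satisfies the Yang--Baxter equation, the two bracketed products agree, and conjugating by $g$ gives the equation for $\mathcal{T}R$.

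The main obstacle, and the only genuinely delicate point, is the bookkeeping in the middle step: one must choose the correct conjugated form of $\mathcal{T}R$ on each leg and the correct signs in $g=f_1f_3^{-1}$ so that the contributions on the untouched legs really do commute past $R$ and cancel; an asymmetric or wrong-sign choice would leave residual copies of $f$ on the middle factor that fail to cancel. If one prefers a self-contained route, the alternative is to expand both sides directly into the components $\mathcal{T}R_1,\mathcal{T}R_2$, reduce everything to $R_1,R_2$ and $f^{\pm1}$ via the automorphism relations, and then match the two sides term by term using the three component identities \eqref{eq:R_III_1}--\eqref{eq:R_III_3}; this works but is exactly the calculation the conjugation argument is designed to bypass.
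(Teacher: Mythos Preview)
Your proof is correct and takes a genuinely different route from the paper. The paper expands both sides of the Yang--Baxter equation for $\mathcal{T}R$ componentwise on a generic triple $(x,y,z)$, obtains three equations in $R_1,R_2,f^{\pm1}$ (your equations \eqref{eq:TR_III_1}--\eqref{eq:TR_III_3} in spirit), and then checks each one by substituting suitable $f$-translates of the variables into the component identities \eqref{eq:R_III_1}--\eqref{eq:R_III_3} for $R$. Your argument bypasses all of this by recognizing that $\mathcal{T}R\times\id$ and $\id\times\mathcal{T}R$ are both conjugates of $R\times\id$ and $\id\times R$ by the \emph{same} invertible map $g=f_1f_3^{-1}$, so the Yang--Baxter equation transfers formally. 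The conjugation approach is cleaner, explains \emph{why} the result holds, and generalizes immediately to $\mathcal{T}^tR$ (take $g=f_1^t f_3^{-t}$); the paper's direct computation, while heavier, has the minor advantage of being entirely self-contained and not relying on spotting the two alternate factorizations of $\mathcal{T}R$.
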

\begin{proof}L.H.S=
\begin{align*}
(\mathcal{T}R \times \id_X ) (\id_X \times \mathcal{T}R) (\mathcal{T}R \times \id_X)~~(x,y,z)=&(\mathcal{T}R \times \id_X) (\id_X \times \mathcal{T}R)~~ \big( R_1(x,f(y)), R_2(f^{-1}(x),y), z \big)\\
=&(\mathcal{T}R \times \id_X ) ~~ \big( R_1(x,f(y)), R_1(  R_2 (f^{-1}(x), y), f(z) ),\\& R_2 ( R_2 (f^{-2}(x), f^{-1}(y), z) \big)\\
=&\big( R_1 \big( R_1(x,f(y), R_1(R_2(x,f(y)), f^2(z) ) \big), \\&R_2 \big( R_1 (f^{-1}(x), y), R_1 (R_2(f^{-1}(x),y), f(z)\big),\\& R_2 ( R_2 (f^{-2}(x), f^{-1}(y), z) \big)
\end{align*}

R.H.S=
\begin{align*}
(\id_X \times \mathcal{T}R)
 (\mathcal{T}R \times \id) 
(\id_X \times \mathcal{T}R)~~(x,y,z)=&(\id_X \times \mathcal{T}R) 
 (\mathcal{T}R \times \id_X)
 ~~\big( x, R_1(y,f(z)), R_2 (f^{-1}(y),z) \big)\\
=&(\id_X \times \mathcal{T}R) ~~R_1\big( x, R_1(f(y), f^2(z)), R_2(f^{-1}(x), R_1(y, f(z))),\\& R_2(f^{-1}(y),z) \big)\\
=& \big( R_1(x, R_1(f(y), f^2(z))), R_1(R_2(f^{-1}(x), R_1(y, f(z))), R_2(y, f(z)) \big),\\
&R_2(R_2(f^{-2}(x), R_1(f^{-1}(y),z)), R_2(f^{-1}(y),z)) \big)
\end{align*}

Now L.H.S=R.H.S, if and only if, the following equations hold for all $x,y,z \in X$:
\begin{align}
R_1 \big( R_1(x,f(y), R_1(R_2(x,f(y)), f^2(z) ) \big)= R_1\big(x, R_1(f(y), f^2(z))\big) \label{eq:TR_III_1}\\
R_2 \big( R_1 (f^{-1}(x), y), R_1 (R_2(f^{-1}(x),y), f(z)\big)= R_1\big(R_2(f^{-1}(x), R_1(y, f(z))\big), R_2(y,f(z))\big) \label{eq:TR_III_2}\\
R_2 \big( R_2 (f^{-2}(x), f^{-1}(y), z\big) =R_2\big(R_2(f^{-2}(x), R_1(f^{-1}(y),z)), R_2(f^{-1}(y),z)\big) \label{eq:TR_III_3}
\end{align}

Equation \ref{eq:TR_III_1} is derived from Equation \ref{eq:R_III_1} by substituting $a$ with $x$, $b$ with $f(y)$ and $c$ with $f^2(z)$. Equation \ref{eq:TR_III_2} is obtained from Equation \ref{eq:R_III_2} by substituting $a$ with $f^{-1}(x)$, $b$ with $y$ and $c$ with $f(z)$. Equation \ref{eq:TR_III_3} is derived from Equation \ref{eq:R_III_3} by substituting $a$ with $f^{-2}(x)$, $b$ with $f^{-1}(y)$ and $c$ with $z$.
\end{proof}

For a given integer $t$, we define the operator $\mathcal{T}^tR: X \times X \to X \times X$ as follows:
\[
\mathcal{T}^tR(x,y)= (\mathcal{T}^tR_1(x,y), \mathcal{T}^tR_2(x,y)):=(R_1(x,f^t(y)), R_2(f^{-t}(x),y)).
\]
Lemma \ref{lemma: TR_satisfy_YBE} establishes that for every $t \in \mathbb{Z}$, the triplet $(X, f, \mathcal{T}^tR)$ constitutes a twisted Yang-Baxter set. 

Furthermore, we state the following results without proof as they are straightforward.

\begin{proposition}
Let $(X,f,R)$ and $(X',f', R')$ be twisted Yang-Baxter sets. If $(X,f,R) \cong (X',f', R')$, then $(X, f, \mathcal{T}^tR) \cong (X', f', \mathcal{T}^tR')$ for all $t \in \mathbb{Z}$.
\end{proposition}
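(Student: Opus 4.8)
The plan is to show that the very same map realizing the isomorphism $(X,f,R)\cong(X',f',R')$ also realizes the desired isomorphism. Let $\phi:X\to X'$ be an isomorphism of twisted Yang-Baxter sets; unwinding the definition of a homomorphism, $\phi$ is a bijection satisfying the two commuting squares
\[
(\phi\times\phi)\circ R = R'\circ(\phi\times\phi), \qquad \phi\circ f = f'\circ\phi .
\]
Since these are precisely the two conditions in the definition of a morphism of twisted Yang-Baxter sets, and the $f$-square is left untouched when we pass from $R$ to $\mathcal{T}^tR$, it suffices to verify that $\phi$ continues to intertwine the twisted operators, i.e. that $(\phi\times\phi)\circ\mathcal{T}^tR = \mathcal{T}^tR'\circ(\phi\times\phi)$.

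First I would promote the relation $\phi\circ f=f'\circ\phi$ to all integer powers. Composing appropriately with $f^{-1}$ and ${f'}^{-1}$ gives $\phi\circ f^{-1}={f'}^{-1}\circ\phi$, and an immediate induction then yields $\phi\circ f^s={f'}^s\circ\phi$ for every $s\in\mathbb{Z}$; in particular ${f'}^t(\phi(y))=\phi(f^t(y))$ and ${f'}^{-t}(\phi(x))=\phi(f^{-t}(x))$. I would also record the component form of the $R$-square, namely $\phi(R_1(a,b))=R'_1(\phi(a),\phi(b))$ and $\phi(R_2(a,b))=R'_2(\phi(a),\phi(b))$ for all $a,b\in X$.

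With these in hand the verification is a direct substitution into the definition $\mathcal{T}^tR(x,y)=\big(R_1(x,f^t(y)),R_2(f^{-t}(x),y)\big)$. Evaluating $\mathcal{T}^tR'$ at $(\phi(x),\phi(y))$ and using the two facts above,
\begin{align*}
\mathcal{T}^tR'(\phi(x),\phi(y)) &= \big(R'_1(\phi(x),{f'}^t(\phi(y))),\; R'_2({f'}^{-t}(\phi(x)),\phi(y))\big)\\
&= \big(R'_1(\phi(x),\phi(f^t(y))),\; R'_2(\phi(f^{-t}(x)),\phi(y))\big)\\
&= \big(\phi(R_1(x,f^t(y))),\; \phi(R_2(f^{-t}(x),y))\big)\\
&= (\phi\times\phi)\big(\mathcal{T}^tR(x,y)\big),
\end{align*}
which is exactly the required identity. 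As $\phi$ is a bijection intertwining both $f$ and $\mathcal{T}^tR$, it is an isomorphism $(X,f,\mathcal{T}^tR)\to(X',f',\mathcal{T}^tR')$.

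I do not anticipate any genuine obstacle: the argument is pure bookkeeping, and the only point requiring slight care is extending the $f$-intertwining relation to the negative power $f^{-t}$, which is precisely what makes the second component $R_2(f^{-t}(x),y)$ transform correctly. (As a side remark, the conclusion should read $(X,f,\mathcal{T}^tR)\cong(X',f',\mathcal{T}^tR')$, since $\mathcal{T}^tR$ is built from the automorphism $f$ of $X$.)
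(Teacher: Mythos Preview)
Your argument is correct and is the natural one: the paper does not actually supply a proof of this proposition, so there is nothing to compare against. Your observation that the conclusion should read $(X,f,\mathcal{T}^tR)\cong(X',f',\mathcal{T}^tR')$ is also well taken; the appearance of $f'$ on the $X$-side is a typo in the statement.
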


\begin{corollary}\label{corollary:twisted_Yang-Baxter_set_is_Yang-Baxter_set}
If $(X,f,R)$ is a twisted Yang-Baxter set, then the following properties hold:
\begin{enumerate}
\item $(X,\mathcal{T}R)$ is a Yang-Baxter set.
\item If $(X,f,R)$ is a twisted birack, then $(X,\mathcal{T}R)$ is a birack.
\item If $(X,f,R)$ is a twisted biquandle, then $(X,\mathcal{T}R)$ is a biquandle.
\end{enumerate}
\end{corollary}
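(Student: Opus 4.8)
The plan is to verify each of the three claims in turn, in each case reducing the required property to the corresponding structure on $(X,R)$ together with the bijectivity, respectively the automorphism, property of $f$. Recall throughout that $\mathcal{T}R_1(x,y)=R_1(x,f(y))$ and $\mathcal{T}R_2(x,y)=R_2(f^{-1}(x),y)$.

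For part (1) there is essentially nothing left to do. The invertibility of $\mathcal{T}R$ has already been recorded, with the inverse given explicitly by \eqref{eq:TR_definition}, and Lemma~\ref{lemma: TR_satisfy_YBE} shows that $\mathcal{T}R$ satisfies the set-theoretic Yang-Baxter equation. By the definition of a Yang-Baxter set, these two facts together say precisely that $(X,\mathcal{T}R)$ is a Yang-Baxter set.

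For part (2), I would check the two birack conditions directly. To prove left-invertibility of $\mathcal{T}R_1$, fix $x,z\in X$; since $(X,R)$ is a birack, there is a unique $w\in X$ with $R_1(x,w)=z$, and since $f$ is a bijection, $y:=f^{-1}(w)$ is then the unique element satisfying $\mathcal{T}R_1(x,y)=R_1(x,f(y))=z$. The right-invertibility of $\mathcal{T}R_2$ is symmetric: fix $y,u\in X$, use right-invertibility of $R_2$ to obtain a unique $v$ with $R_2(v,y)=u$, and set $x:=f(v)$, which is the unique element with $\mathcal{T}R_2(x,y)=R_2(f^{-1}(x),y)=u$.

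For part (3), by part (2) we already know $(X,\mathcal{T}R)$ is a birack, so only the type~I condition remains. The key computation is to rewrite the fixed-point equation using the commuting square for $f$. Writing $x=f(x')$ and invoking $R(f(x'),f(a))=\big(f(R_1(x',a)),f(R_2(x',a))\big)$, I would show that $\mathcal{T}R(x,a)=(x,a)$ holds if and only if $R_1(x',a)=x'$ and $R_2(x',a)=a$, that is, if and only if $R(x',a)=(x',a)$. The type~I condition for the biquandle $(X,R)$ then furnishes, for each $a$, a unique such $x'$, and bijectivity of $f$ upgrades this to a unique $x=f(x')$ satisfying $\mathcal{T}R(x,a)=(x,a)$, establishing the type~I condition for $\mathcal{T}R$. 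The main obstacle is precisely this bookkeeping in part (3): one must apply the automorphism identity for $f$ (together with its consequence for $f^{-1}$, namely $f^{-1}(x)=x'$) in exactly the right places to collapse the two scalar equations into the single fixed-point equation $R(x',a)=(x',a)$; once that equivalence is in hand, everything else is routine.
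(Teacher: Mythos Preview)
Your proof is correct and follows exactly the route the paper intends: the corollary is stated without proof, as an immediate consequence of the explicit inverse \eqref{eq:TR_definition} and Lemma~\ref{lemma: TR_satisfy_YBE} for part~(1), together with the straightforward bookkeeping with $f$ that you supply for parts~(2) and~(3). There is nothing to add; your verification of the type~I condition via the substitution $x=f(x')$ and the automorphism identity for $f$ is precisely the intended argument.
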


\begin{corollary}
Let $(X,f, R)$ be a twisted birack. Then $(X,f,R) \cong (X, f , \mathcal{T}R)$ if and only if $f\colon X \to X$ is the identity map.
\end{corollary}

\begin{remark}
A {\it twisted rack} (or {\it twisted quandle}) $(X, f, *)$ is a rack (or quandle) $(X, *) $ equipped with an automorphism $f: X \to X$. Note that $(X,f, *)$ can also be represented as $(X,f ,R)$, where $R(x,y)=(y, x*y)$ for all $x, y \in X$. However, under the operator $\mathcal{T}R$, $(X, \mathcal{T}R)$ is not a rack (or quandle) unless $f$ is the identity map on $X$.  
\end{remark}


\section{Twisted Yang-Baxter (co)homology theory}\label{sec:homology_theory_for_twisted_yang_baxter_set}
In this section we introduce a cohomology theory for twisted-Yang Baxter sets, which we term the {\it twisted set-theoretic Yang-Baxter cohomology theory}.

Let $(X,f,R)$ be a twisted Yang-Baxter set, and $(t,m_1, m_2) \in \mathbb{Z}^3$. Consider the ring $\Lambda=\mathbb{Z}[T,T^{-1}]$ of Laurent polynomials over the integers. For each integer $n>0$, $C_n^{\TYB}(X)$ is a $\Lambda$-module over the generating set $X^n$, where
\[T.(x_1, x_2, \ldots, x_n)=(f(x_1), f(x_2), \ldots, f(x_n)),
\]
for all $(x_1, x_2, \ldots, x_n)\in X^n$.
For each $n$, we define the $n$-boundary homomorphism
\[
\partial_{n}^{(t,m_1, m_2)}: C_n^{\TYB} (X) \to C_{n-1}^{\TYB}(X)
\]
as
\[
\partial_{n}^{(t,m_1, m_2)} = \sum_{i=1}^{n} (-1)^i \partial^{(t,m_1, m_2)}_{i,n},
\]
where
\[
\partial_{i,n}^{(t,m_1, m_2)}= \partial_{i,n}^{l,(t,m_1)} - \partial_{i,n}^{r, (t,m_2)}.
\]
The face maps $\partial_{i,n}^{l,(t,m_1)}$ and $\partial_{i,n}^{r,(t,m_2)}$ are illustrated graphically in Figures \ref{fig:twisted_left_map} and \ref{fig:twisted_right_map}.

\begin{figure}[H]
\begin{center}

\includegraphics[height=3in,width=6in,angle=00]{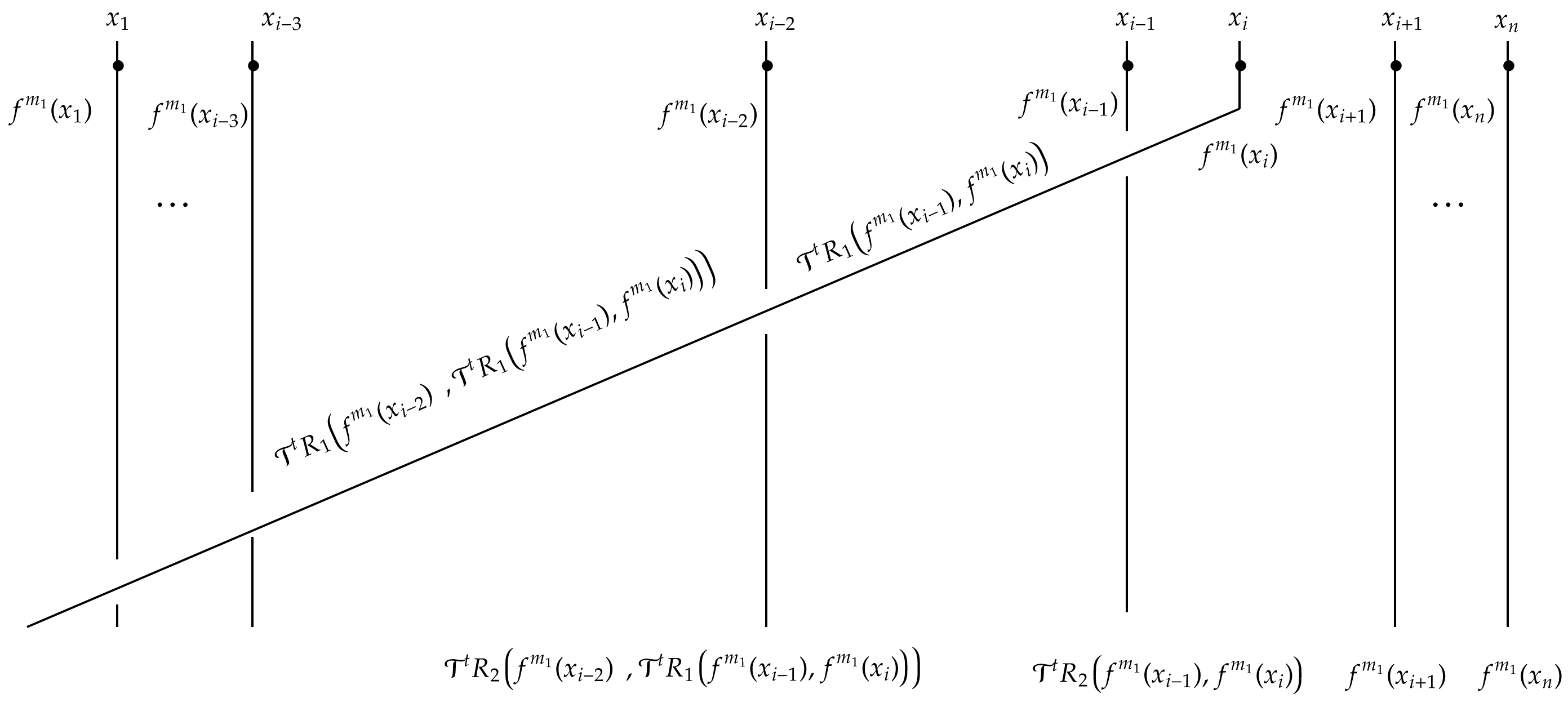}
\end{center}
\caption{A face map $\partial^{l,(t,m_1)}_{i,n}.$}
\label{fig:twisted_left_map}
\end{figure}

\begin{figure}[H]
\begin{center}

\includegraphics[height=3.5in,width=6in,angle=00]{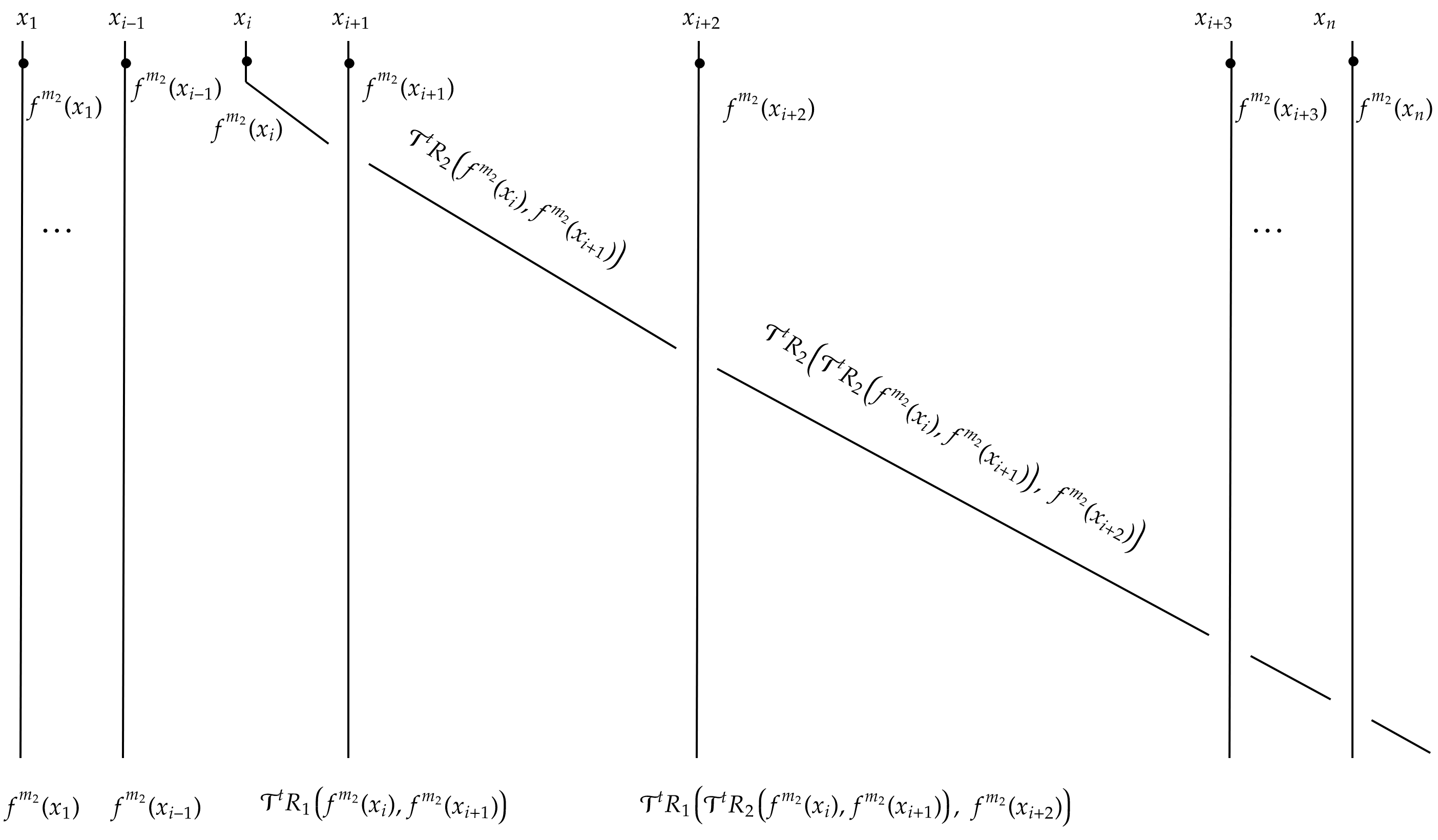}
\end{center}
\caption{A face map $\partial^{r,(t,m_2)}_{i,n}.$}
\label{fig:twisted_right_map}
\end{figure}

We define $C_0^{\TYB}(X)=\{0\}$. It is easy to check that the graded set $(X^n)_{n \geq 0}$ along with the boundary maps $\partial_{i,n}^{l,(t,m_1)}$ and $\partial_{i,n}^{r, (t,m_2)}$, forms a precubical set. This implies that $
C_{*}^{\TYB, (t,m_1, m_2)}(X):=\big( C_{n}^{\TYB}(X), \partial_{n}^{(t,m_1, m_2)} \big)
$ is a chain complex.

For a given $\Lambda$-module $M$, consider the chain and cochain complexes
\begin{align*}
&C_{*}^{\TYB,(t,m_1, m_2)}(X;M) = C_{*}^{\TYB, (t,m_1, m_2)}(X) \otimes_{\Lambda} M, && \partial^{(t,m_1, m_2)}= \partial^{(t,m_1, m_2)} \otimes_{\Lambda} \id_M;\\
&C^{*}_{\TYB, (t,m_1, m_2)}(X;M)= \Hom_{\Lambda}\big(C_{*}^{\TYB, (t,m_1, m_2)}(X),M\big) && \delta_{(t,m_1, m_2)}=\Hom(\partial^{(t,m_1, m_2)}, \id_M).
\end{align*}
The $n$-th homology and cohomology groups of these complexes are called {\it $t$-$(m_1, m_2)$- twisted Yang-Baxter homology group} and {\it cohomology group}, and are denoted by $H_{*}^{\TYB, (t,m_1, m_2)}(X;M)$ and $H^{*}_{\TYB, (t,m_1, m_2)}(X;M)$, respectively. The (co)cycles  are referred to as {\it $t$-$(m_1, m_2)$-Yang-Baxter $($co$)$cycles}.

By taking $f$ to be the identity map on $X$, we recover the cohomology theory of Yang-Baxter sets introduced in \cite{MR2128041, MR3381331}.

\begin{definition}
For a given $(t, m_1, m_2) \in \mathbb{Z}^3$, the (co)homology theory defined above is termed the {\it $t$-$(m_1, m_2)$-$($co$)$homology theory of the set-theoretic twisted Yang-Baxter set} $(X,f,R)$.
\end{definition}

\begin{proposition}\label{prop:can_take_m1_0}
Let $(X,f,R)$ be a twisted Yang-Baxter set, $M$ a $\Lambda$-module, and $t \in \mathbb{Z}$. Suppose $(m_1, m_2), (n_1,n_2) \in \mathbb{Z}^2$, such that $n_1=m_1 + k$ and $n_2=m_2 +k$ for some $k \in \mathbb{Z}$. Then, for each $n \in \mathbb{Z}$,  $H_n^{\TYB, (t,m_1, m_2) }(X;M)\cong H_n^{\TYB, (t,n_1, n_2) }(X;M)$ and $H^n_{\TYB, (t,m_1, m_2)}(X;M) \cong H^n_{\TYB, (t,n_1, n_2) }(X;M)$, where these isomorphisms are $\Lambda$-module isomorphisms.
\end{proposition}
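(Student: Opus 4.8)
The plan is to produce an explicit isomorphism of chain complexes of free $\mathbb{Z}[T,T^{-1}]$-modules and then transport it through the functors $-\otimes_{\mathbb{Z}[T,T^{-1}]}M$ and $\Hom_{\mathbb{Z}[T,T^{-1}]}(-,M)$. The key observation is that the underlying graded module $\big(C_n^{\TYB}(X)\big)_{n\ge 0}$, free on $X^n$, does not depend on the pair $(m_1,m_2)$ at all; only the boundary homomorphism $\partial_n^{(m_1,m_2)}$ does, through the powers of $f$ that the parameters $m_1$ and $m_2$ insert on the distinguished strands of the face maps $\partial_{i,n}^{l,m_1}$ and $\partial_{i,n}^{r,m_2}$. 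Hence it suffices to construct, for each $n$, a $\mathbb{Z}[T,T^{-1}]$-module automorphism $\Phi_n$ of $C_n^{\TYB}(X)$ intertwining $\partial_n^{(m_1,m_2)}$ and $\partial_n^{(n_1,n_2)}$.

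First I would define $\Phi_n$ as a position-dependent power of $f$,
\[
\Phi_n(x_1,\dots,x_n)=\big(f^{a_1}(x_1),\dots,f^{a_n}(x_n)\big),
\]
where the exponents $a_i=a_i(k)$ are read off from Figures~\ref{fig:twisted_left_map} and~\ref{fig:twisted_right_map} and chosen so that the extra factor of $f^{k}$ appearing when one passes from $\partial^{(m_1,m_2)}$ to $\partial^{(n_1,n_2)}$ is exactly absorbed. A single \emph{uniform} power $T^{k}=(f^{k},\dots,f^{k})$ cannot do this: since every $\partial_n^{(m_1,m_2)}$ is already a $\mathbb{Z}[T,T^{-1}]$-module map, conjugation by $T^{k}$ returns the same operator and shifts no parameter, which is precisely why the exponents must vary with the position $i$. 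Because powers of $f$ commute, each $\Phi_n$ commutes with the diagonal action $T$ and is therefore automatically a $\mathbb{Z}[T,T^{-1}]$-module homomorphism; it is invertible, with $\Phi_n^{-1}$ obtained by negating every exponent.

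Next I would verify the chain-map identity $\Phi_{n-1}\circ\partial_n^{(m_1,m_2)}=\partial_n^{(n_1,n_2)}\circ\Phi_n$. As both boundaries are alternating sums $\sum_i(-1)^i\big(\partial_{i,n}^{l,\bullet}-\partial_{i,n}^{r,\bullet}\big)$, this reduces to the face-level identities
\[
\Phi_{n-1}\circ\partial_{i,n}^{l,m_1}=\partial_{i,n}^{l,\,m_1+k}\circ\Phi_n,
\qquad
\Phi_{n-1}\circ\partial_{i,n}^{r,m_2}=\partial_{i,n}^{r,\,m_2+k}\circ\Phi_n,
\]
one pair for each $i$. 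Each is checked directly from the graphical definition: one slides the position-dependent powers of $f$ coming from $\Phi$ past the $R$- and $\overbar{R}$-crossings constituting the face map, using only that $f\times f$ commutes with $R$ and with $\overbar{R}$ (the commuting squares established above), so that the inserted $f^{m_1}$ becomes $f^{m_1+k}$ and the inserted $f^{m_2}$ becomes $f^{m_2+k}$. Crucially, \emph{both} identities demand the \emph{same} increment $k$; this is exactly the hypothesis $n_1-m_1=n_2-m_2=k$, and it is what guarantees that one single $\Phi$ can simultaneously correct the left and the right face maps.

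Finally I would conclude formally. The family $\Phi_\ast=(\Phi_n)$ is an isomorphism of chain complexes of free $\mathbb{Z}[T,T^{-1}]$-modules, so applying $-\otimes_{\mathbb{Z}[T,T^{-1}]}M$ gives an isomorphism between the complexes computing $H_\ast^{\TYB,(m_1,m_2)}(X;M)$ and $H_\ast^{\TYB,(n_1,n_2)}(X;M)$, hence $\mathbb{Z}[T,T^{-1}]$-module isomorphisms on homology in every degree (the degrees $n\le 0$ being trivial, as $C_0^{\TYB}(X)=0$). Dually, the contravariant functor $\Hom_{\mathbb{Z}[T,T^{-1}]}(-,M)$ carries $\Phi_\ast$ to a cochain isomorphism and yields the cohomology statement. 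The main obstacle is confined to the second and third steps: pinning down the correct position-dependent exponents $a_i$ from the figures and carrying out the strand-sliding bookkeeping so that the increment $k$ emerges uniformly in $i$ for both the left and the right face maps.
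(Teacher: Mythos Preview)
Your overall strategy—produce a chain isomorphism and push it through the usual functors—is sound, but the specific implementation has a real problem, and you have missed a much simpler argument.

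The difficulty is with the ``position-dependent'' map $\Phi_n(x_1,\dots,x_n)=(f^{a_1}x_1,\dots,f^{a_n}x_n)$ with the $a_i$ genuinely varying in $i$. The face maps $\partial_{i,n}^{l,m_1}$ and $\partial_{i,n}^{r,m_2}$ are built out of repeated applications of $R$ (and $\overbar R$) to \emph{adjacent} strands, and the only commutation relation available is $(f\times f)\circ R=R\circ(f\times f)$, i.e.\ with \emph{equal} powers on both inputs. If $a_i\ne a_{i+1}$ there is no way to slide $(f^{a_i},f^{a_{i+1}})$ past $R$; already for $n=2$ the equations force $a_1=a_2$. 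So the ``strand-sliding bookkeeping'' you defer to the end cannot be carried out as stated, and your diagnosis that the exponents \emph{must} vary with position is incorrect.

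The paper's proof is a one-liner that avoids all of this. Inspecting the definition (Figures~\ref{fig:twisted_left_map} and~\ref{fig:twisted_right_map}) shows that the twist parameter is applied as a \emph{global} power of $f$ to every output coordinate of the face map, so
\[
\partial_{i,n}^{l,m_1+k}=T^{k}\circ\partial_{i,n}^{l,m_1},\qquad
\partial_{i,n}^{r,m_2+k}=T^{k}\circ\partial_{i,n}^{r,m_2},
\]
and hence $\partial_n^{(n_1,n_2)}=T^{k}\circ\partial_n^{(m_1,m_2)}$ (and likewise after $\otimes\,M$ and for the coboundary). Since $T^{k}$ is a unit, the two differentials have the \emph{same} kernel and the \emph{same} image in every degree, so the homology and cohomology groups are not merely isomorphic but equal. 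If you still want an explicit chain isomorphism, take $\Phi_n=T^{-nk}$: this is position-\emph{independent} but degree-dependent, and one checks $\Phi_{n-1}\circ\partial_n^{(m_1,m_2)}=T^{-(n-1)k}\partial_n^{(m_1,m_2)}=T^{k}\partial_n^{(m_1,m_2)}T^{-nk}=\partial_n^{(n_1,n_2)}\circ\Phi_n$. Your dismissal of ``uniform'' powers conflated degree-independence with position-independence; a degree-independent $T^{k}$ is indeed a conjugation and changes nothing, but letting the exponent grow with $n$ exactly absorbs the extra $T^{k}$ in each boundary.
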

\begin{proof}
Observe that for each $n \in \mathbb{Z}^+$, \[\partial_n^{(t,n_1, n_2)} \otimes_{\Lambda} \id_M = (T^k\otimes_{\Lambda} \id_M) \circ (\partial_n^{(t,m_1, m_2)} \otimes_{\Lambda} \id_M),\] and
\[
\delta^n_{(t,n_1, n_2)}=T^k \circ \delta^n_{(t,m_1, m_2)}.
\]
\end{proof}

\begin{remark}
Moving forward, our primary focus will be on studying the cohomology theory of $(X,f,R)$, with $t=0$, unless specifically stated otherwise. Nevertheless, all subsequent results presented in the paper remain valid for non-zero values of $t$. Furthermore, for $t=0$ and by Proposition \ref{prop:can_take_m1_0} considering $m_1=0$, we will simplify the notations. For instance,  $C_*^{\TYB, (0, 0, m)}(X;M)$ will be represented as $C_*^{\TYB, (m)}(X;M)$ and $C^{*}_{\TYB, (0,0, m)}(X, M)$ as $C^{*}_{\TYB, (m)}(X, M)$, and the same simplification will be applied to the homology and cohomology groups.
\end{remark}

Let $(X,f,R)$ be a twisted biquandle. Consider a $\Lambda$-submodule, denoted as $C_n^{\TD}(X)$, of $C_n^{\TYB}(X)$ defined as
\[
C_n^{\TD}(X)= \textrm{ span}\{ (x_1, \ldots, x_n) \in C_n^{\TYB}(X)~|~ R(x_i, x_{i+1}) =(x_i, x_{i+1})
\textrm{ for some }i=1,\ldots, n-1\}
\]
if $n \geq 2$; otherwise, $C_n^{\TD}(X)=0$.

The following result is easy to prove, and we leave it for the readers.

\begin{proposition}\label{prop:sub_chain_complex}
Let $(X,f,R)$ be a twisted biquandle. Then $ \partial_n^{(m)}(C_n^{\TD}(X)) \subseteq C_{n-1}^{\TD}(X)$ and  $(C_n^{\TD}(X), \partial_n^{(m)}) $ is a sub-chain complex of $(C_n^{\TYB}(X), \partial_n^{(m)})$.
\end{proposition}

By Proposition \ref{prop:sub_chain_complex}, for a given twisted biquandle $(X,f,R)$, we have a quotient chain complex $C_*^{\TBQ, (m)}(X)= (C_n^{\TBQ}, \partial_n^{(m)})$, where $C_n^{\TBQ}(X)= C_n^{\TYB}(X)/C_n^{\TD}(X)$, and $\partial_n^{(m)}$ is the induced homomorphism. For a $\Lambda$-module $M$, define the chain and cochain complexes $C_*^{\TBQ, (m)}(X;M)=\big(C_n^{\TBQ}(X;M), \partial_n^{(m)}\big)$ and $C^*_{\TBQ, (m)}(X;M)=(C^n_{\TBQ}(X;M), \delta_{(m)}^{n})$, where 
\begin{align*}
C_n^{\TBQ}(X;M) &= C_n^{\TBQ}(X) \otimes_{\Lambda} M, && \partial_n^{(m)}= \partial_n^{(m)} \otimes \id_M,\\
C^n_{\TBQ}(X;M) &= \Hom_{\Lambda}(C_n^{\TBQ}(X), M), && \delta^n_{(m)} = \Hom(\partial_n^{(m)}, \id_M)
\end{align*}

For $W=\TYB, \TBQ$ (denoting a twisted Yang Baxter, and a twisted biquandle, respectively) and a given $m$, the $n$-th group of cycles and boundaries are denoted by $\ker(\partial^{(m)})= Z_n^{W,(m)}(X;M) \subseteq C_n^W(X;M)$ and $\im(\partial^{(m)})=B_n^{W,(m)}(X;M) \subseteq C_n^{W}(X;M),$ respectively. The $n$th group of cocycles and coboundaries  are denoted respectively by $\ker(\delta_{(m)})=Z^n_{W,(m)}(X;M) \subseteq C^n_W(X;M)$ and $\im(\delta_{(m)})=B^n_{W,(m)} \subseteq C^n_W(X;M)$. Thus the (co)homology groups are given as quotients:

\begin{align*}
H_n^{W,(m)}(X;M) &= Z_n^{W, (m)}(X;M)/B_n^{W, (m)}(X;M),\\
H^n_{W, (m)}(X;M) &= Z^n_{W,(m)}(X;M)/ B^n_{W, (m)}(X;M).
\end{align*}

Let $(X,f,R)$ be a twisted Yang-Baxter set and $M$ a $\Lambda$-module.
\begin{itemize}
\item Then the $1$-cocycle condition for $\eta \in Z^1_{\TBQ,(m)}(X;M)$ is
\[
-\eta(x_2) + \eta(R_1(f^m(x_1), f^m(x_2))) + \eta(R_2(x_1, x_2)) - \eta(f^m(x_1)) =0
\]
which can also be expressed as
\[
-\eta(x_2) + T^m(\eta(R_1(x_1, x_2)) + \eta(R_2(x_1, x_2)) - T^m(\eta(x_1)) =0
\]
\item  The $2$-cocycle condition for $\phi \in Z^2_{\TBQ, (m)}(X;M)$ is 
\begin{align*}
&\phi( R_1(f^{m}(x_1), f^{m}(x_2) ), R_1( R_2( f^{m}(x_1), f^{m}(x_2) ) , f^{m}(x_3) ) ) +\\ &\phi(R_2(x_1, x_2 ), x_3 )  + \phi(f^{m}(x_1), f^{m} (x_2) )
=\\
& \phi(x_2, x_3 ) + \phi( f^{m}(x_1), R_1( f^{m} (x_2), f^{m} (x_3) ) ) +\\
& \phi( R_2(x_1, R_1(x_2, x_3 )) , R_2 ( x_2 , x_3 ) )
\end{align*}
which can also be expressed as
\begin{equation}\label{eq:2-cocycle}
\begin{split}
&T^m(\phi( R_1(x_1, x_2 ), R_1( R_2( x_1, x_2 ) , x_3 ) ) +\\ &\phi(R_2(x_1, x_2 ), x_3 )  + T^m\big(\phi(x_1, x_2) \big)
=\\
& \phi(x_2, x_3 ) + T^m\big(\phi( x_1, R_1(x_2, x_3 ) )\big) +\\
& \phi( R_2(x_1, R_1(x_2, x_3) ) , R_2 ( x_2 , x_3 ) )
\end{split}
\end{equation}
If $(X,f,R)$ is a twisted biquandle, then a $2$-cocycle $\phi \in Z_{\TBQ, (m)}^2(X;M)$ satisfy the Equation \ref{eq:2-cocycle} and $\phi(x,y)=0$ for all $x, y \in X$ such that $R(x,y)=(x,y)$.
\end{itemize}

\section{Calculations of Second (Co)homology}\label{sec:Calcualtions_of_homology}
In this section, we compute the second (co)homology groups of some twisted biquandles.

\begin{example}
Let $m=1$. Consider the twisted quandle $(T_2, I, *)$ where $(T_2, *)$ is a trivial quandle with two elements $\{0,1\}$, and $I$ is the identity permutation on $\{0, 1\}$. In this case, every (co)chain is a (co)cycle and zero is the only (co)boundary (see Lemma \cite[Lemma 6.5]{
MR1990571}).
\end{example}

\begin{example}\label{example:2}
Let $n \in \mathbb{Z}_{\geq 0}$ and $m=1$. Consider the twisted quandle $X=(T_2, f, *)$ where $(T_2, *)$ is a trivial quandle with two elements $\{0,1\}$, and $f$ is the permutation $(0~1)$. Then $C_1(X; \Lambda_n)$ is a module over $\Lambda_n$ generated by $\langle (0) \rangle$, $C_2(X; \Lambda_n)$ is a module over $\Lambda_n$ generated by $\langle (0,1) \rangle$ and $C_3(X; \Lambda_n)$ is a module over $\Lambda_n$ generated by $\langle (0,1,0) \rangle$. We have
$\partial_3^{(1)}((0,1,0))=0$, thus $B_2^{\TBQ, (1)}(X;\Lambda_n)=\{0\}$, and 
$\partial_2^{(1)}((0,1))=2((1)-(0)).$
When $n$ is odd, then $Z_2^{\TBQ, (1)}(X;\Lambda_n)$ is a module over $\Lambda_n$ generated by $\{ (1,0)+(0,1)\}$, and is isomorphic to the group $\mathbb{Z}_n$. Thus $H_2^{TBQ, (1)}(X;\Lambda_n)\cong \mathbb{Z}_n$ as a group. For $n$ even, $Z_2^{\TBQ, (1)}(X;\Lambda_n)$ is a module over $\Lambda_n$ generated by $\{ (1,0)+(0,1), \frac{n}{2}(0,1) \}$. Thus $H_2^{\TBQ, (1)}(X;\Lambda_n)\cong \mathbb{Z}_n \oplus \mathbb{Z}_n$ as a group.
\end{example}

\begin{example}
Take $X=(T_2,f, *)$ as stated in Example \ref{example:2}. Consider the cochain complex $C^*(X; \Lambda_n/\langle 1-T^2\rangle)$. Let $g \in C^2(X, \Lambda_n/\langle 1-T^2\rangle)$. Then $\delta^3_{(1)} (g)(0,1,0)=0$, thus the kernel of $\delta^3_{(1)}$ is $C^2(X; \Lambda_n/\langle 1-T^2\rangle)$. Now for $g \in C^1(X; \Lambda_n/\langle 1-T^2\rangle)$, $\delta^2_{(1)}( g)((0,1))=2g((0))-2g((1))=(2-2T)g((0)).$ One can check that $C^1(X; \Lambda_n/\langle 1-T^2\rangle)= \{\phi_{(a,b)},\textrm{ where } \phi_{(a,b)}((0))=a + b T,  \textrm{ and }a, b \in \mathbb{Z}_n\}\cong \mathbb{Z}_n \oplus T\mathbb{Z}_n$ and $C^2(X; \Lambda_n/\langle 1-T^2\rangle)= \{ \psi_{(a,b)}, \textrm{ where } \psi_{(a,b)}((0,1))=a + b T, \textrm{ and }a, b \in \mathbb{Z}_n\}\cong \mathbb{Z}_n\oplus T\mathbb{Z}_n.$ If $n=2$, then $H^2_{\TBQ,(1)}(X; \Lambda_n/\langle 1- T^2\rangle \cong \mathbb{Z}_2 \oplus\mathbb{Z}_2$. Now we consider the case when $n$ is odd prime. Then $\delta^2_{(1)}(C^1(X; \Lambda_n/\langle 1-T^2 \rangle)=2(1-T)(\mathbb{Z}_n \oplus T\mathbb{Z}_n)=2(1-T)\mathbb{Z}_n$. Thus $$H^2_{\TBQ, (1)}(X; \Lambda_n/\langle1-T^2\rangle\cong\frac{\mathbb{Z}_n\oplus T\mathbb{Z}_n}{2(1-T) \mathbb{Z}_n} \cong \frac{\mathbb{Z}_n\oplus T\mathbb{Z}_n}{(1-T) \mathbb{Z}_n}\cong \mathbb{Z}_n~ \textrm{ as a group}.$$
\end{example}

\begin{example}
Consider the twisted quandle $X=(R_3, I, *)$, where $(R_3, *)$ is the dihedral quandle on three elements $\{0,1,2\}$, and $I$ is the identity map on $R_3$. Then $H_2^{\TBQ, (1)}=\{0\}$ (see \cite{MR3042590}). 
\end{example}

\begin{example}
Consider the twisted quandle $X=(R_3, f, *)$, where $(R_3, *)$ is the dihedral quandle on three elements $\{0,1,2\}$, and $f(x)=x+1$ which is the permutation $(0~1~2)$. Consider the chain complex $C_*^{\TBQ, (1)}(X; \Lambda_3)$. Then

\begin{align*}
\partial_2^{\TBQ, (1)}((0,1))&=(1)-(2)\\
\partial_2^{\TBQ, (1)}((0,2))&=(0)-(2)\\
\partial_2^{\TBQ, (1)}((1,2))&=(2)-(0)\\
\partial_2^{\TBQ, (1)}((1,0))&=(1)-(0)\\
\partial_2^{\TBQ, (1)}((2,0))&=(0)-(1)\\
\partial_2^{\TBQ, (1)}((2,1))&=(2)-(1).
\end{align*}
It is evident that $Z_2^{\TBQ, (1)}$ is generated by $\{(0,1)+(1,2)+(2,0), (0,1)+(2,1) \}$ over $\Lambda_3$. Noting that $(0,1)+(1,2)+(2,0)=\partial_3^{\TBQ,(1)}((0,1,0))+ \partial_3^{\TBQ,(1)}((2,0,2))-\partial_3^{\TBQ,(1)}((2,0,1))$ and $(0,1)+(2,1)=\partial_3^{\TBQ, (1)}((1,2,0))$, we get $H_2^{\TBQ, (1)}(X; \Lambda_3)=0$.
\end{example}

\begin{example}
Consider the twisted quandle $X=(R_3, f, *)$, where $(R_3, *)$ is the dihedral quandle on three elements $\{0,1,2\}$, and $f$ is the permutation $(1~2)$. Consider the chain complex $C_*^{\TBQ, (1)}(X, \Lambda_3)$. Then
\[\partial_2^{(1)}((0,1))=\partial_2^{(1)}((0,2))=-\partial^{(1)}_2(1,2)=-\partial^{(1)}_2((2,1))=-(0)-(1)-(2),\]
and thus $Z_2^{\TBQ, (1)}\cong \mathbb{Z}_3^5$.
Furthermore, by solving equations derived from the images of the generator under $\partial_3$, one can verify that $B_2^{\TBQ, (1)}\cong \mathbb{Z}_3^3$. Thus $H_2^{\TBQ, (1)} \cong \mathbb{Z}_3^2$.

\end{example}

\section{Extension Theory}\label{sec:extensions}
In this section, we study extensions cocycle extensions of twisted Yang-Baxter sets.

Let $(X,f,R)$ be a twisted Yang-Baxter set, $M$ be a $\Lambda$-module and $\phi_1$ and $ \phi_2$ be maps from $X\times X$ to $M$, such that $\phi_i \circ (T \times T)=(T \times T) \circ \phi_i$ for $i=1,2$.  We then have the following  result.
	
\begin{proposition} \label{extprop}
Let $V=M \times X$, $m_1, m_2 \in \mathbb{Z}$ and $S: V \times V \rightarrow  V \times V $ be defined by
$$S( (a, x), (b, y) ) = ( ~(b + T^{m_1}\phi_1(x, y) , R_1(x, y)), (a+ T^{m_2}\phi_2(x, y),  R_2(x, y))~)$$
for all $(a, x), (b,y) \in V$. Consider a map $\phi: X \times X \to M$ defined as
\[		
\phi(x,y):= T^{m_1}\phi_1(x, y) + T^{m_2}\phi_2(x,y),~\textrm{ for all } x, y \in X.
\]
If  $(V, S)$ is a Yang-Baxter set, then $\phi \in  Z^2_{\rm TYB, (0)}(X;M)$.
\end{proposition}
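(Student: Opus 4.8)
The plan is to verify directly that the set-theoretic Yang--Baxter equation for $S$ splits into an $X$-part, which holds automatically, and an $M$-part, which turns out to be exactly the $(0,0)$ cocycle condition for $\phi$. Concretely, I would apply both composites
\[
(S\times\id_V)(\id_V\times S)(S\times\id_V) \qquad\text{and}\qquad (\id_V\times S)(S\times\id_V)(\id_V\times S)
\]
to a generic triple $\big((a,x),(b,y),(c,z)\big)\in V^3$ and compare the two outputs slot by slot, each slot being a pair in $M\times X$.

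First I would observe that the $X$-coordinates of $S$ are governed entirely by $R=(R_1,R_2)$, independently of the $M$-coordinates. Hence the three $X$-coordinates produced by the left composite are $R_1\big(R_1(x,y),R_1(R_2(x,y),z)\big)$, $R_2\big(R_1(x,y),R_1(R_2(x,y),z)\big)$ and $R_2\big(R_2(x,y),z\big)$, while those produced by the right composite are $R_1\big(x,R_1(y,z)\big)$, $R_1\big(R_2(x,R_1(y,z)),R_2(y,z)\big)$ and $R_2\big(R_2(x,R_1(y,z)),R_2(y,z)\big)$. These agree slot-by-slot precisely by Equations~\eqref{eq:R_III_1}, \eqref{eq:R_III_2} and \eqref{eq:R_III_3}, so the $X$-coordinates carry no information and match unconditionally.

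The content therefore lies in the three $M$-coordinates. Tracking the additive $M$-parts through the three applications of $S$, the placeholders $a,b,c$ cancel in their respective slots, and equality of the left and right $M$-coordinates yields three identities in $\phi_1,\phi_2$. Writing these out, the first slot is an identity purely in $T^{m_1}\phi_1$, the third purely in $T^{m_2}\phi_2$, while the middle slot mixes $T^{m_1}\phi_1$ and $T^{m_2}\phi_2$. Summing the three identities, the mixed terms reorganize and one obtains exactly Equation~\eqref{eq:2-cocycle} with $m=0$ for the single cochain $\phi=T^{m_1}\phi_1+T^{m_2}\phi_2$; that is, $\delta^2_{(0,0)}\phi=0$. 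Since the hypothesis $\phi_i\circ(T\times T)=(T\times T)\circ\phi_i$ guarantees that each $\phi_i$, and hence $\phi$, is $\mathbb{Z}[T,T^{-1}]$-equivariant, $\phi$ is a genuine cochain in $C^2_{\TYB,(0,0)}(X;M)$, so that $\phi\in Z^2_{\TYB,(0,0)}(X;M)$.

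The main obstacle is purely organizational: the bookkeeping of the $M$-coordinates across the triple composition, keeping straight which of $a,b,c$ and which $\phi_i$-shift lands in which slot after each of the three applications of $S$. The conceptual point that makes the argument clean is that the $X$-dynamics decouple from the $M$-shifts, so the Yang--Baxter equation for $S$ separates into the (already established) Yang--Baxter equation for $R$ together with three additive identities whose sum is the cocycle condition; once this splitting is seen, the verification reduces to careful substitution using \eqref{eq:R_III_1}--\eqref{eq:R_III_3}.
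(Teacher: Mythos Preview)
Your proposal is correct and follows essentially the same approach as the paper: both compute the two triple composites of $S$ on a generic triple, observe that the $X$-coordinates agree by the Yang--Baxter equation for $R$, and then compare the three $M$-coordinates to obtain three additive identities whose sum is precisely the $(0,0)$ cocycle condition for $\phi=T^{m_1}\phi_1+T^{m_2}\phi_2$. Your additional remarks on the decoupling of the $X$-dynamics and on $T$-equivariance ensuring $\phi$ is a genuine cochain are welcome clarifications, but the argument itself is the paper's.
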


\begin{proof}
We compute
\begin{align*}		
& (S \times \id_V )( \id_V \times S)(S \times \id_V)(( a, x), (b, y), (c, z) )= \\
& (~(   c + T^{m_1}\phi_1( R_2( x,y) , z) +	T^{m_1}\phi_1(R_1(x, y ), R_1( R_2( x, y), z)) ,R_1( R_1(x, y), R_1(R_2( x, y), z) )),\\
&  (b + T^{m_1}\phi_1(x, y) + T^{m_2}\phi_2( R_1(x,y), R_1( R_2( x, y), z) ) ,R_2( R_1(x, y), R_1(R_2( x, y), z))), \\
& (a +  T^{m_2}\phi_2(x, y) + T^{m_2}\phi_2( R_2( x, y) , z), R_2(R_2(x, y) , z))~)
\end{align*}
and on the other hand,
\begin{align*}	
&(\id_V \times S)(S \times \id_V)( \id_V \times S)(( a, x), (b, y), (c, z) )=\\& (~ ( c + T^{m_1}\phi_1( y, z) + T^{m_1}\phi_1( x, R_1(y, z) ) , R_1(x, R_1(y, z))), \\
& ( b +  T^{m_2}\phi_2( y, z) + T^{m_1}\phi_1(R_2(x, R_1(y, z)),   R_2(y, z) ), R_1(R_2(x, R_1(y, z)),  R_2(y, z))), \\
& ( a + T^{m_2}\phi_2(x, R_1(y, z) )+T^{m_2}\phi_2(R_2(x, R_1(y, z)),   R_2(y, z)) , R_2(R_2(x, R_1(y, z)),  R_2(y, z))) ~) .
\end{align*}
Since $(V,S)$ is a Yang-Baxter set, we obtain the following three equations from each factor containing $c, b, a$ respectively.
\begin{align*}	
T^{m_1}\phi_1( R_2( x,y) , z) + T^{m_1}\phi_1(R_1(x,  y ), R_1( R_2( x, y), z) )&= T^{m_1}\phi_1( y, z) + T^{m_1}\phi_1( x, R_1(y, z) ) \\	
T^{m_1}\phi_1(x, y) + T^{m_2}\phi_2( R_1(x,y),	R_1( R_2( x, y), z) )&=   T^{m_2}\phi_2( y, z) +  T^{m_1} \phi_1(R_2(x, R_1(y, z)),   R_2(y, z) ) \\	
T^{m_2} \phi_2(x, y) + T^{m_2} \phi_2( R_2( x, y) , z)& =  T^{m_2} \phi_2(x, R_1(y,z) ) +   T^{m_2} \phi_2(R_2(x, R_1(y, z)),   R_2(y, z)).
\end{align*}
By adding the equalities above, we obtain, that the map $\phi=\phi_1+ \phi_2$ is in $Z^2_{\TYB, (0)}(X;M)$.
\end{proof}	
	
\begin{remark}
Notice that $\phi_1$ and $\phi_2$ are not $2$-cocycles.
\end{remark}

\begin{proposition}
Let $V=M \times X$, $m_1, m_2 \in \mathbb{Z}$,  and $S: V \times V \rightarrow  V \times V $ be defined by 
\[S( (a, x), (b, y) ) = ( (b + T^{m_1}\phi(x, y) , R_1(x, y) ), (a+ T^{m_2}\phi(x, y),  R_2(x, y) )\]
for all $(a, x), (b,y) \in V$ and where  $\phi: X \times X \rightarrow M$ and $\phi \circ (T \times T)=(T \times T) \circ \phi$.  Define	
\[		
\psi(x,y):= T^{m_1}\phi(x, y) + T^{m_2}\phi(x, y).
\]
If  $(V, S)$ is a Yang-Baxter set, then $\psi \in  Z^2_{\TYB, (1,m_1,m_2)}(X;M)$.	
\end{proposition}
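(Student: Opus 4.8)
The plan is to rerun the Yang--Baxter computation of the preceding proposition in the special case $\phi_1 = \phi_2 = \phi$. Indeed, the operator $S$ in the statement is precisely the operator of Proposition~\ref{extprop} with $\phi_1 = \phi_2 = \phi$ and the same $m_1, m_2$, so I may reuse verbatim the two explicit expansions of $(S \times 1)(1 \times S)(S \times 1)$ and $(1 \times S)(S \times 1)(1 \times S)$ computed there. Since $(V,S)$ is assumed to be a Yang--Baxter set, equating these two triples component by component (the factors carrying $c$, $b$, and $a$, respectively) yields three identities in $M$; with $\phi_1 = \phi_2 = \phi$ they read
\begin{align*}
&T^{m_1}\big(\phi(R_2(x,y),z) + \phi(R_1(x,y),R_1(R_2(x,y),z)) - \phi(y,z) - \phi(x,R_1(y,z))\big) = 0,\\
&T^{m_1}\phi(x,y) + T^{m_2}\phi(R_1(x,y),R_1(R_2(x,y),z)) = T^{m_2}\phi(y,z) + T^{m_1}\phi(R_2(x,R_1(y,z)),R_2(y,z)),\\
&T^{m_2}\big(\phi(x,y) + \phi(R_2(x,y),z) - \phi(x,R_1(y,z)) - \phi(R_2(x,R_1(y,z)),R_2(y,z))\big) = 0,
\end{align*}
for all $x,y,z \in X$.

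Next I would unwind the $(m_1,m_2)$-twisted $2$-cocycle condition that $\psi = T^{m_1}\phi + T^{m_2}\phi$ must satisfy, namely $\delta^2_{(m_1,m_2)}\psi = \psi \circ \partial_3^{(m_1,m_2)} = 0$. Writing $\partial_3^{(m_1,m_2)} = \sum_{i}(-1)^i(\partial_{i,3}^{l,m_1} - \partial_{i,3}^{r,m_2})$ and reading off the six faces from Figures~\ref{fig:twisted_left_map} and~\ref{fig:twisted_right_map} exactly as in the derivation of~\eqref{eq:2-cocycle}, the condition becomes the $(m_1,m_2)$-analogue of~\eqref{eq:2-cocycle}: an identity among the same six evaluations of $\psi$, each weighted by $T^{m_1}$ or $T^{m_2}$ according to whether it issues from a left or a right face. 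Using that $\psi$, $\phi$, and the boundary maps are all $\mathbb{Z}[T,T^{-1}]$-module homomorphisms, and that $f$ is an automorphism of $(X,R)$ so that $f^{m}$ commutes through $R_1$ and $R_2$, I can rewrite every term of this condition as a power of $T$ times an evaluation of $\phi$ on untwisted arguments, producing precisely the six evaluations that appear in the three displayed equations above.

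The decisive step, and the one I expect to be the main obstacle, is to exhibit the twisted cocycle condition for $\psi$ as the correct $\mathbb{Z}[T,T^{-1}]$-linear combination of the three displayed identities. The subtlety is that one must multiply these equations by suitable monomials in $T$ \emph{before} adding, so that the result lands in the $(m_1,m_2)$-theory and not in the $(0,0)$-theory obtained by naive summation (as in Proposition~\ref{extprop}); keeping the two twisting parameters separate throughout is exactly what distinguishes this statement from its predecessor, and getting the weights $T^{m_1}$ on the left-face terms and $T^{m_2}$ on the right-face terms to match term by term is where the whole argument is won or lost. The remark that $\phi$ itself need not be a $2$-cocycle signals that no shortcut through $\delta^2\phi = 0$ is available, so the full recombination is genuinely required, and I would finish by checking the matching against the $(m_1,m_2)$-version of~\eqref{eq:2-cocycle} directly.
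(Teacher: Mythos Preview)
Your overall strategy coincides with the paper's: expand both sides of the Yang--Baxter equation for $S$ (indeed reusing the computation of Proposition~\ref{extprop} with $\phi_1=\phi_2=\phi$), read off the three identities from the $c$-, $b$-, and $a$-components, and combine them. The paper carries out exactly these steps.

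Where you diverge is in the final combination. The paper does \emph{not} weight the three equations by separate monomials in $T$ before summing; it simply adds the three displayed identities as they stand and writes ``by adding these equalities the result follows.'' So the step you flag as decisive --- choosing $T$-multipliers so that the sum lands in $Z^2_{\TYB,(m_1,m_2)}$ rather than in the $(0,0)$-theory obtained by naive summation --- is not something the paper executes; its argument is precisely the unweighted addition you warn against. Your proposal correctly identifies this as the crux but does not resolve it either: you leave the actual matching against the $(m_1,m_2)$-version of~\eqref{eq:2-cocycle} as a final check to be done, and a straightforward attempt to realize that condition as a single $\mathbb{Z}[T,T^{-1}]$-linear combination $\alpha\cdot\text{(E1)}+\beta\cdot\text{(E2)}+\gamma\cdot\text{(E3)}$ with monomial coefficients runs into incompatible constraints on $\alpha,\gamma$ when $m_1\neq m_2$. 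In short, your plan and the paper's proof agree up to the last line, and neither spells out how the $(m_1,m_2)$-cocycle condition (as opposed to the $(0,0)$-one) is extracted from the three equations.
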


\begin{proof}
We compute
\begin{align*}		
&(S \times 1 )( 1 \times S)(S \times 1)(	( a, x), (b, y), (c, z) ) =\\& ( ~(   c + T^{m_1}\phi( R_2( x,y) , z) +	
		T^{m_1}\phi(R_1(x,  y ), R_1( R_2( x, y), z) ) , R_1( R_1(x, y), R_1(R_2( x, y), z) ) \; ),\\
& (b + T^{m_1}\phi(x, y) + T^{m_2}\phi( R_1(x,y), R_1( R_2( x, y), z) ) , R_2( R_1(x, y), R_1(R_2( x, y), z) )), \\
& ( a +  T^{m_2}\phi(x, y) + T^{m_2}\phi( R_2( x, y) , z), R_2(R_2(x, y) , z)) ~)
\end{align*}
and on the other hand,
\begin{align*}	
&( 1 \times S)(S \times 1)( 1 \times S)(( a, x), (b, y), (c, z) )  = \\
& ( ( c + T^{m_1}\phi( y, z) + T^{m_1}\phi( x, R_1(y, z) ) , R_1(x, R_1(y, z) ), \\
& ( b +  T^{m_2}\phi( y, z) + T^{m_1}\phi(R_2(x, R_1(y, z)),   R_2(y, z) ),R_1(R_2(x, R_1(y, z)),  R_2(y, z))), \\
& ( a + T^{m_2}\phi(x, R_1(y, z) ) +   T^{m_2}\phi(R_2(x, R_1(y, z)),   R_2(y, z)),R_2(R_2(x, R_1(y, z)),  R_2(y, z))) ~) .
\end{align*}
Given that $(V,S)$ is a Yang-Baxter set, we obtain the following three equations
\begin{align*}	
T^{m_1}\phi( R_2( x,y) , z) +	
			T^{m_1}\phi(R_1(x,  y ), R_1( R_2( x, y), z) )=& T^{m_1}\phi( y, z) + T^{m_1}\phi(x, R_1(y, z) )\\
T^{m_1}\phi(x, y) + T^{m_2}\phi( R_1(x,y),R_1( R_2( x, y), z) ) =&   T^{m_2}\phi( y, z) + T^{m_1}\phi(R_2(x, R_1(y, z)),R_2(y, z) ) \\	
T^{m_2} \phi(x, y) + T^{m_2}\phi( R_2( x, y) , z) =&  T^{m_2}\phi(x, R_1(y, z) )+ T^{m_2}\phi(R_2(x, R_1(y, z)),   R_2(y, z))
\end{align*}
and by adding these equalities the result follows.
\end{proof}


\section{Twisted biquandle Cocycle invariants of classical knots}\label{sec:twisted_biquandle_cocycle_invariants_of_classical_knots}

Let $K$ be a simple, closed, oriented, smooth curve with normals on a plane. This curve $K$ divides the plane into regions. Fix one of the regions as a {\it base region}, and assign integer $0$ to it. Now let $\mathcal{R}$ be any region. We will assign an integer to $\mathcal{R}$, denoted by $\mathcal{L}(\mathcal{R})$, termed the {\it Alexander numbering} of $\mathcal{R}$.

Consider a smooth arc $\alpha$ on the plane from a point lying in the interior of the base region to $\mathcal{R}$, such that the intersection points of $\alpha$ with $K$ are only transversal double points. Suppose that while tracing the curve $\alpha$ to the region $\mathcal{R}$, it intersects $K$ at $n_1$ points where the normal points in the direction of tracing $\alpha$ and at $n_2$ points where the normal points in the opposite direction of tracing $\alpha$. Then $\mathcal{L}(\mathcal{R})$ is $n_1-n_2$.

The Alexander numbering does not depend on the choice of $\alpha$ but depends on the choice of the base region. For more on Alexander numbering and its relation to knots, we refer the reader to \cite{MR1885217,MR1695171,MR1487374}

\begin{definition}
Let $K$ be an oriented classical knot diagram with normals. Let $\tau$ be a crossing. There are four regions near $\tau$, and the unique region from which the normals of over-arc and under-arc point is called the {\it source region} of $\tau$.
\end{definition}

\begin{definition}
The {\it Alexander numbering} $\mathcal{L}(\tau)$ of a crossing $\tau$  is defined to be $\mathcal{L}(\mathcal{R})$ where $\mathcal{R}$ is the source region of $\tau$. See Fig \ref{fig:alexander_numbering_crossing} for illustration.
\end{definition}

\begin{figure}[H]
\begin{center}
\includegraphics[height=1in,width=1.5in,angle=00]{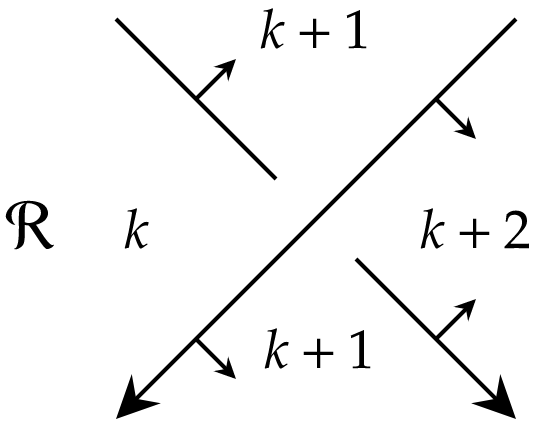}
\end{center}
\caption{The Alexander numbering of a crossing $\tau$.}
\label{fig:alexander_numbering_crossing}
\end{figure}

\begin{figure}[H]
\begin{center}
\includegraphics[height=1.5in,width=1.5in,angle=00]{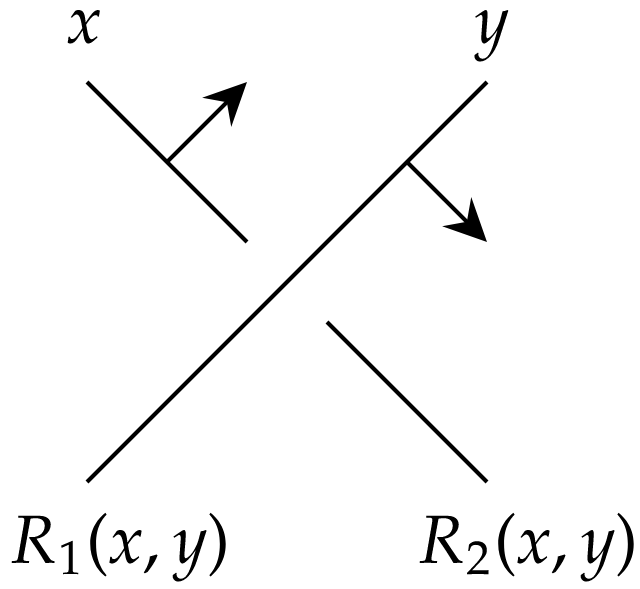}
\end{center}
\caption{Coloring of arcs by the elements of a twisted biquandle $(X,f,R)$.}
\label{fig:coloring_rules}
\end{figure}

\begin{figure}[H]
\tikzset{every picture/.style={line width=0.75pt}} 

\begin{tikzpicture}[x=0.75pt,y=0.75pt,yscale=-1,xscale=1]

\draw    (150,81) -- (52.12,178.88) ;
\draw [shift={(50,181)}, rotate = 315] [fill={rgb, 255:red, 0; green, 0; blue, 0 }  ][line width=0.08]  [draw opacity=0] (10.72,-5.15) -- (0,0) -- (10.72,5.15) -- (7.12,0) -- cycle    ;
\draw    (50,81) -- (90,121) ;
\draw    (110,141) -- (147.88,178.88) ;
\draw [shift={(150,181)}, rotate = 225] [fill={rgb, 255:red, 0; green, 0; blue, 0 }  ][line width=0.08]  [draw opacity=0] (10.72,-5.15) -- (0,0) -- (10.72,5.15) -- (7.12,0) -- cycle    ;
\draw    (70,101) -- (77.88,93.12) ;
\draw [shift={(80,91)}, rotate = 135] [fill={rgb, 255:red, 0; green, 0; blue, 0 }  ][line width=0.08]  [draw opacity=0] (5.36,-2.57) -- (0,0) -- (5.36,2.57) -- (3.56,0) -- cycle    ;
\draw    (130,101) -- (137.88,108.88) ;
\draw [shift={(140,111)}, rotate = 225] [fill={rgb, 255:red, 0; green, 0; blue, 0 }  ][line width=0.08]  [draw opacity=0] (5.36,-2.57) -- (0,0) -- (5.36,2.57) -- (3.56,0) -- cycle    ;
\draw    (130,161) -- (137.88,153.12) ;
\draw [shift={(140,151)}, rotate = 135] [fill={rgb, 255:red, 0; green, 0; blue, 0 }  ][line width=0.08]  [draw opacity=0] (5.36,-2.57) -- (0,0) -- (5.36,2.57) -- (3.56,0) -- cycle    ;
\draw    (70,161) -- (77.88,168.88) ;
\draw [shift={(80,171)}, rotate = 225] [fill={rgb, 255:red, 0; green, 0; blue, 0 }  ][line width=0.08]  [draw opacity=0] (5.36,-2.57) -- (0,0) -- (5.36,2.57) -- (3.56,0) -- cycle    ;
\draw    (350,81) -- (447.88,178.88) ;
\draw [shift={(450,181)}, rotate = 225] [fill={rgb, 255:red, 0; green, 0; blue, 0 }  ][line width=0.08]  [draw opacity=0] (10.72,-5.15) -- (0,0) -- (10.72,5.15) -- (7.12,0) -- cycle    ;
\draw    (410,121) -- (450,81) ;
\draw    (352.12,178.88) -- (390,141) ;
\draw [shift={(350,181)}, rotate = 315] [fill={rgb, 255:red, 0; green, 0; blue, 0 }  ][line width=0.08]  [draw opacity=0] (10.72,-5.15) -- (0,0) -- (10.72,5.15) -- (7.12,0) -- cycle    ;
\draw    (370,101) -- (377.88,93.12) ;
\draw [shift={(380,91)}, rotate = 135] [fill={rgb, 255:red, 0; green, 0; blue, 0 }  ][line width=0.08]  [draw opacity=0] (5.36,-2.57) -- (0,0) -- (5.36,2.57) -- (3.56,0) -- cycle    ;
\draw    (430,161) -- (437.88,153.12) ;
\draw [shift={(440,151)}, rotate = 135] [fill={rgb, 255:red, 0; green, 0; blue, 0 }  ][line width=0.08]  [draw opacity=0] (5.36,-2.57) -- (0,0) -- (5.36,2.57) -- (3.56,0) -- cycle    ;
\draw    (430,101) -- (437.88,108.88) ;
\draw [shift={(440,111)}, rotate = 225] [fill={rgb, 255:red, 0; green, 0; blue, 0 }  ][line width=0.08]  [draw opacity=0] (5.36,-2.57) -- (0,0) -- (5.36,2.57) -- (3.56,0) -- cycle    ;
\draw    (370,161) -- (377.88,168.88) ;
\draw [shift={(380,171)}, rotate = 225] [fill={rgb, 255:red, 0; green, 0; blue, 0 }  ][line width=0.08]  [draw opacity=0] (5.36,-2.57) -- (0,0) -- (5.36,2.57) -- (3.56,0) -- cycle    ;

\draw (37,63.4) node [anchor=north west][inner sep=0.75pt]    {$x$};
\draw (157,63.4) node [anchor=north west][inner sep=0.75pt]    {$y$};
\draw (17,183.4) node [anchor=north west][inner sep=0.75pt]    {$R_{1}( x,\ y)$};
\draw (138,183.4) node [anchor=north west][inner sep=0.75pt]    {$R_{2}( x,\ y)$};
\draw (301,186.4) node [anchor=north west][inner sep=0.75pt]    {$\overline{R_{1}}( a,\ b)$};
\draw (422,186.4) node [anchor=north west][inner sep=0.75pt]    {$\overline{R_{2}}( a,\ b)$};
\draw (297,19.4) node [anchor=north west][inner sep=0.75pt]    {$R_{1}( x,\ y)$};
\draw (418,19.4) node [anchor=north west][inner sep=0.75pt]    {$R_{2}( x,\ y)$};
\draw (357.25,211.74) node [anchor=north west][inner sep=0.75pt]  [rotate=-89.92]  {$=$};
\draw (467.57,212) node [anchor=north west][inner sep=0.75pt]  [rotate=-89.92]  {$=$};
\draw (347,233.4) node [anchor=north west][inner sep=0.75pt]    {$x$};
\draw (458,233.4) node [anchor=north west][inner sep=0.75pt]    {$y$};
\draw (357.57,41) node [anchor=north west][inner sep=0.75pt]  [rotate=-89.92]  {$=$};
\draw (343.04,62.4) node [anchor=north west][inner sep=0.75pt]    {$a$};
\draw (464.6,41) node [anchor=north west][inner sep=0.75pt]  [rotate=-89.92]  {$=$};
\draw (451,62.4) node [anchor=north west][inner sep=0.75pt]    {$b$};
\draw (36,122.4) node [anchor=north west][inner sep=0.75pt]    {$\mathcal{L}( \tau _{1})$};
\draw (341,122.4) node [anchor=north west][inner sep=0.75pt]    {$\mathcal{L}( \tau _{2})$};
\draw (42,270.4) node [anchor=north west][inner sep=0.75pt]    {$B_{n}( \tau _{_{1}} ,\ \mathcal{C}) =T^{-n\mathcal{L}( \tau _{1})}( \phi ( x,y))$};
\draw (361,270.4) node [anchor=north west][inner sep=0.75pt]    {$B_{n}( \tau _{2} ,\ \mathcal{C}) =T^{-n\mathcal{L}( \tau _{_{2}})}\left( \phi ( x,y)^{-1}\right)$};

\end{tikzpicture}
\caption{Associated Boltzmann weight to the positive and negative crossings.}
\label{fig:associated_virtua_boltzman_weight}

\end{figure}
Consider a classical knot diagram $K$, a finite twisted biquandle $(X,f,R)$, a finite $\Lambda$-module $M$, and $n$ as a fixed positive integer. We use multiplicative notation instead of addition for the elements of $M$. Let $\phi \in Z^2_{\TBQ, (n)}(X;M)$ and $\mathcal{C}$ be a coloring of $K$ using $X$ under the coloring rules shown in Fig \ref{fig:coloring_rules}. A {\it Boltzmann weight} $B_{n}(\tau, C)$ at a crossing $\tau$ is defined as follows:

Let $\mathfrak{u}$ be the under-arc away from which normal to the over-arc points. Let $\mathfrak{o}$ be the over-arc towards which the normal to the under-arc points. Let $\mathcal{C}(\mathfrak{u})=x$ and $\mathcal{C}(\mathfrak{o})=y$. Then define $$B_{n}(\tau, \mathcal{C})=T^{-n\mathcal{L}(\tau)}\big(\phi(x,y)^{\epsilon(\tau)}\big),$$ where $\epsilon(\tau)$ is $1$ if $\tau$ is a positive crossing and $-1$ if $\tau$ is a negative crossing. In Figure \ref{fig:associated_virtua_boltzman_weight}, the association of Boltzmann weight at positive and negative crossings is illustrated.

The {\it state-sum} or the {\it partition function} associated to $K$ is given by
\[
\Phi_n(K)= \sum_{\mathcal{C}} \prod_{\tau} B_{n}(\tau, \mathcal{C}),
\]
where the product $\prod_{\tau} B_{n}(\tau, \mathcal{C})$ is taken over all crossings $\tau$ of the given diagram, and the sum is taken over all the possible colorings of $K$ using $X$ under the rules depicted in Figure \ref{fig:coloring_rules}. The state-sum $\Phi_n(K)$ is in the integral group ring $\mathbb{Z}[M]$.

\begin{theorem}
Let $K_1$ and $K_2$ be two knot diagrams representing the same knot. Then the state-sum invariant is well defined up to the action of $\mathbb{Z}=\langle T \rangle$. In other words $\Phi_n(K_1)= T^k\Phi_n(K_2)$ for some $k \in \mathbb{Z}$.
\end{theorem}
\begin{proof}
We need to show the invariance of the state-sum under the Reidemeister moves (see Figure \ref{fig:Reidemeister_moves}.
For every $x \in X$, there exists a unique $a \in X$ such that $R(a,x)=(a,x)$. Since $\phi$ is  $2$-cocycle, thus $\phi(a,x)=\phi(x,a)=1 \in M$. Noting that $T$ acts on $M$ via an automorphism, we observe that performing the RI move does not alter the state-sum.

\begin{figure}[H]
\begin{center}
\includegraphics[height=1.5in,width=4in,angle=00]{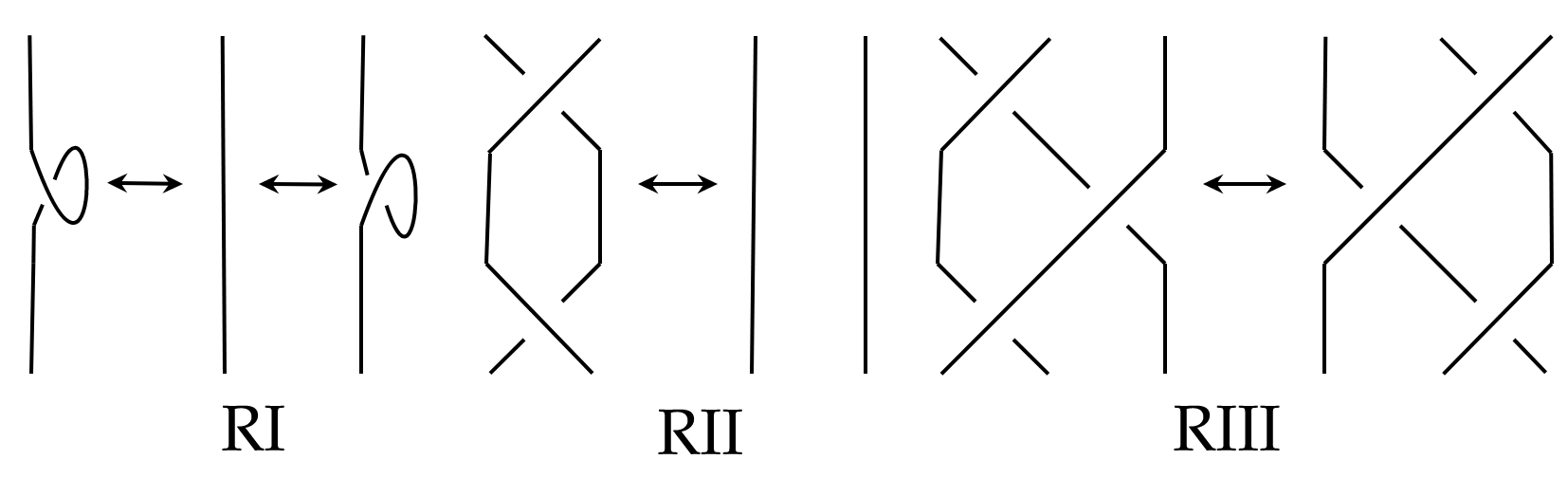}
\end{center}
\caption{Reidemeister moves.}
\label{fig:Reidemeister_moves}

\begin{center}
\includegraphics[height=1.5in,width=3in,angle=00]{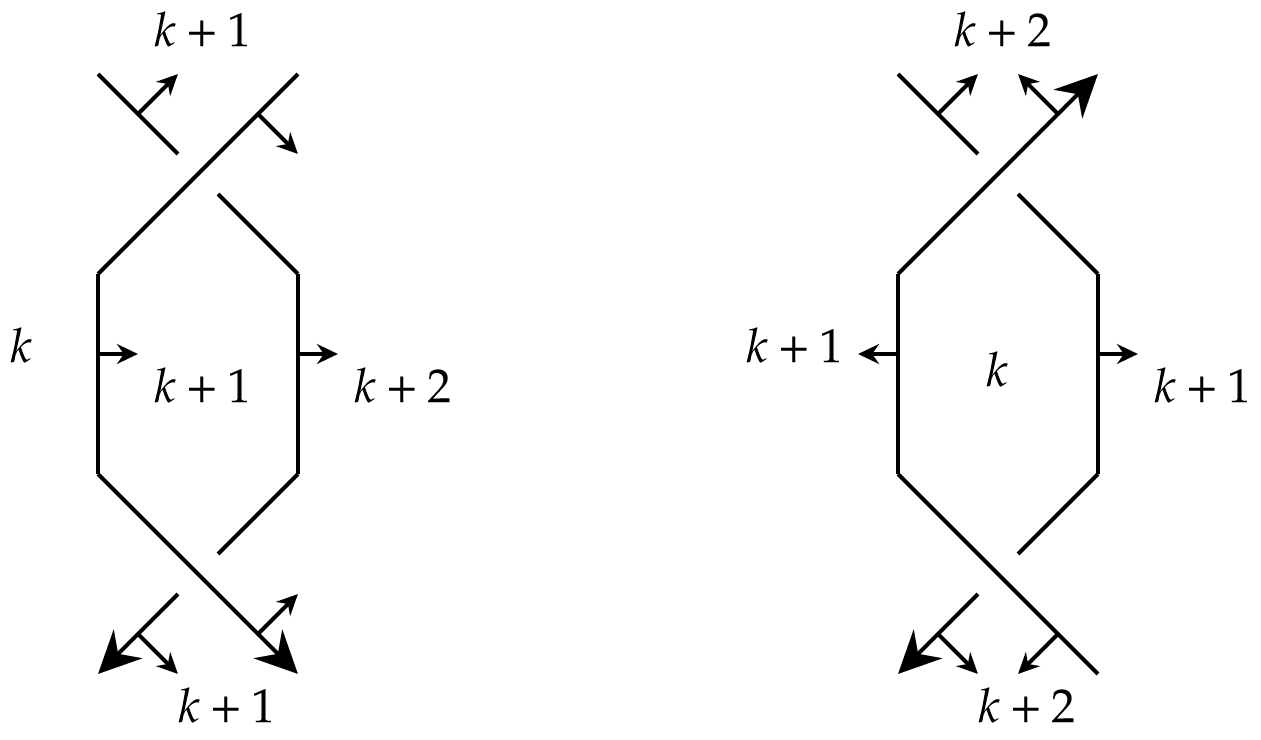}
\end{center}
\caption{Type RII-moves and Alexander numbering.}
\label{fig:type_RII_moves_and_Alexander_numbering.png}
\end{figure}
For the RII-moves shown in Figure \ref{fig:type_RII_moves_and_Alexander_numbering.png}, the signs of the crossings are opposite and the region contributing to the Boltzmann weights for the crossings in each move is the same. Consequently, the product of Boltzmann's weights associated with the crossings in RII-move takes the form $T^{-nk}(\phi(x,y)^{\epsilon}) T^{-nk} (\phi(x,y)^{-\epsilon})$ which simplifies to the identity element in $M$. Thus the state-sum is invariant under type II moves.

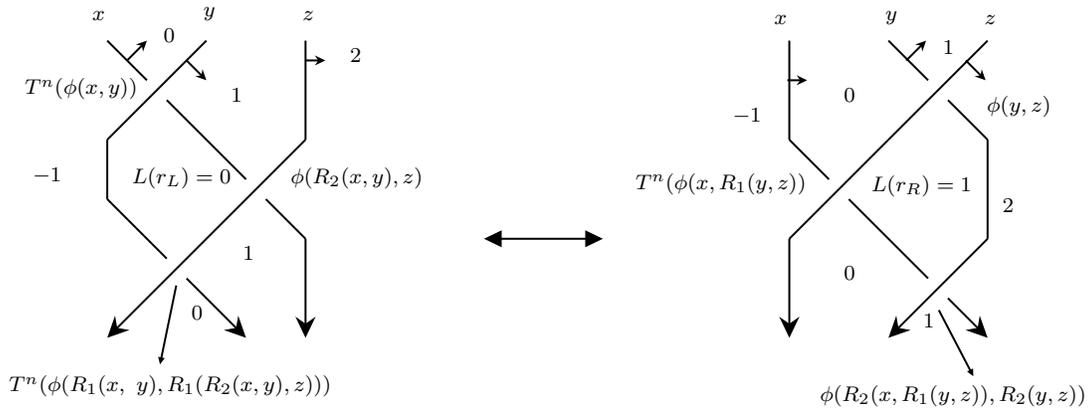
\begin{figure}[H]

\tikzset{every picture/.style={line width=0.75pt}} 

\begin{tikzpicture}[x=0.75pt,y=0.75pt,yscale=-1,xscale=1]

\draw    (100,50) -- (120,70) ;
\draw    (150,50) -- (100,100) ;
\draw    (130,80) -- (170,120) ;
\draw    (100,100) -- (100,130) ;
\draw    (200,50) -- (200,100) ;
\draw    (200,100) -- (102.12,197.88) ;
\draw [shift={(100,200)}, rotate = 315] [fill={rgb, 255:red, 0; green, 0; blue, 0 }  ][line width=0.08]  [draw opacity=0] (10.72,-5.15) -- (0,0) -- (10.72,5.15) -- (7.12,0) -- cycle    ;
\draw    (100,130) -- (130,160) ;
\draw    (140,170) -- (167.88,197.88) ;
\draw [shift={(170,200)}, rotate = 225] [fill={rgb, 255:red, 0; green, 0; blue, 0 }  ][line width=0.08]  [draw opacity=0] (10.72,-5.15) -- (0,0) -- (10.72,5.15) -- (7.12,0) -- cycle    ;
\draw    (180,130) -- (200,150) ;
\draw    (200,150) -- (200,197) ;
\draw [shift={(200,200)}, rotate = 270] [fill={rgb, 255:red, 0; green, 0; blue, 0 }  ][line width=0.08]  [draw opacity=0] (10.72,-5.15) -- (0,0) -- (10.72,5.15) -- (7.12,0) -- cycle    ;
\draw    (444,50) -- (444,100) ;
\draw    (544,50) -- (494,100) ;
\draw    (494,50) -- (514,70) ;
\draw    (524,80) -- (544,100) ;
\draw    (444,100) -- (464,120) ;
\draw    (494,100) -- (444,150) ;
\draw    (444,150) -- (444,197) ;
\draw [shift={(444,200)}, rotate = 270] [fill={rgb, 255:red, 0; green, 0; blue, 0 }  ][line width=0.08]  [draw opacity=0] (10.72,-5.15) -- (0,0) -- (10.72,5.15) -- (7.12,0) -- cycle    ;
\draw    (474,130) -- (514,170) ;
\draw    (544,100) -- (544,150) ;
\draw    (544,150) -- (496.12,197.88) ;
\draw [shift={(494,200)}, rotate = 315] [fill={rgb, 255:red, 0; green, 0; blue, 0 }  ][line width=0.08]  [draw opacity=0] (10.72,-5.15) -- (0,0) -- (10.72,5.15) -- (7.12,0) -- cycle    ;
\draw    (524,180) -- (541.88,197.88) ;
\draw [shift={(544,200)}, rotate = 225] [fill={rgb, 255:red, 0; green, 0; blue, 0 }  ][line width=0.08]  [draw opacity=0] (10.72,-5.15) -- (0,0) -- (10.72,5.15) -- (7.12,0) -- cycle    ;
\draw    (110,60) -- (117.88,52.12) ;
\draw [shift={(120,50)}, rotate = 135] [fill={rgb, 255:red, 0; green, 0; blue, 0 }  ][line width=0.08]  [draw opacity=0] (5.36,-2.57) -- (0,0) -- (5.36,2.57) -- (3.56,0) -- cycle    ;
\draw    (140,60) -- (147.88,67.88) ;
\draw [shift={(150,70)}, rotate = 225] [fill={rgb, 255:red, 0; green, 0; blue, 0 }  ][line width=0.08]  [draw opacity=0] (5.36,-2.57) -- (0,0) -- (5.36,2.57) -- (3.56,0) -- cycle    ;
\draw    (200,60) -- (207,60) ;
\draw [shift={(210,60)}, rotate = 180] [fill={rgb, 255:red, 0; green, 0; blue, 0 }  ][line width=0.08]  [draw opacity=0] (5.36,-2.57) -- (0,0) -- (5.36,2.57) -- (3.56,0) -- cycle    ;
\draw    (443,70) -- (450,70) ;
\draw [shift={(453,70)}, rotate = 180] [fill={rgb, 255:red, 0; green, 0; blue, 0 }  ][line width=0.08]  [draw opacity=0] (5.36,-2.57) -- (0,0) -- (5.36,2.57) -- (3.56,0) -- cycle    ;
\draw    (503,60) -- (510.88,52.12) ;
\draw [shift={(513,50)}, rotate = 135] [fill={rgb, 255:red, 0; green, 0; blue, 0 }  ][line width=0.08]  [draw opacity=0] (5.36,-2.57) -- (0,0) -- (5.36,2.57) -- (3.56,0) -- cycle    ;
\draw    (533,60) -- (540.88,67.88) ;
\draw [shift={(543,70)}, rotate = 225] [fill={rgb, 255:red, 0; green, 0; blue, 0 }  ][line width=0.08]  [draw opacity=0] (5.36,-2.57) -- (0,0) -- (5.36,2.57) -- (3.56,0) -- cycle    ;
\draw    (134.8,173.6) -- (127.21,210.46) ;
\draw [shift={(126.6,213.4)}, rotate = 281.64] [fill={rgb, 255:red, 0; green, 0; blue, 0 }  ][line width=0.08]  [draw opacity=0] (3.57,-1.72) -- (0,0) -- (3.57,1.72) -- cycle    ;
\draw    (519.4,186) -- (535.41,216.74) ;
\draw [shift={(536.8,219.4)}, rotate = 242.48] [fill={rgb, 255:red, 0; green, 0; blue, 0 }  ][line width=0.08]  [draw opacity=0] (3.57,-1.72) -- (0,0) -- (3.57,1.72) -- cycle    ;
\draw    (293,150) -- (347,150) ;
\draw [shift={(350,150)}, rotate = 180] [fill={rgb, 255:red, 0; green, 0; blue, 0 }  ][line width=0.08]  [draw opacity=0] (8.93,-4.29) -- (0,0) -- (8.93,4.29) -- cycle    ;
\draw [shift={(290,150)}, rotate = 0] [fill={rgb, 255:red, 0; green, 0; blue, 0 }  ][line width=0.08]  [draw opacity=0] (8.93,-4.29) -- (0,0) -- (8.93,4.29) -- cycle    ;

\draw (91,33.4) node [anchor=north west][inner sep=0.75pt]  [font=\scriptsize]  {$x$};
\draw (147,31.4) node [anchor=north west][inner sep=0.75pt]  [font=\scriptsize]  {$y$};
\draw (197,33.4) node [anchor=north west][inner sep=0.75pt]  [font=\scriptsize]  {$z$};
\draw (57,67.4) node [anchor=north west][inner sep=0.75pt]  [font=\scriptsize]  {$T^{n}( \phi ( x,y))$};
\draw (191,112.4) node [anchor=north west][inner sep=0.75pt]  [font=\scriptsize]  {$\phi ( R_{2}( x,y) ,z)$};
\draw (49,217.4) node [anchor=north west][inner sep=0.75pt]  [font=\scriptsize]  {$T^{n}( \phi ( R_{1}( x,\ y) ,R_{1}( R_{2}( x,y) ,z)))$};
\draw (435,35.4) node [anchor=north west][inner sep=0.75pt]  [font=\scriptsize]  {$x$};
\draw (491,33.4) node [anchor=north west][inner sep=0.75pt]  [font=\scriptsize]  {$y$};
\draw (541,35.4) node [anchor=north west][inner sep=0.75pt]  [font=\scriptsize]  {$z$};
\draw (542.2,76.4) node [anchor=north west][inner sep=0.75pt]  [font=\scriptsize]  {$\phi ( y,z)$};
\draw (365,115.4) node [anchor=north west][inner sep=0.75pt]  [font=\scriptsize]  {$T^{n}( \phi ( x,R_{1}( y,z))$};
\draw (458,222.4) node [anchor=north west][inner sep=0.75pt]  [font=\scriptsize]  {$\phi ( R_{2}( x,R_{1}( y,z)) ,R_{2}( y,z))$};
\draw (61,112.4) node [anchor=north west][inner sep=0.75pt]  [font=\scriptsize]  {$-1$};
\draw (127,42.4) node [anchor=north west][inner sep=0.75pt]  [font=\scriptsize]  {$0$};
\draw (161,72.4) node [anchor=north west][inner sep=0.75pt]  [font=\scriptsize]  {$1$};
\draw (221,52.4) node [anchor=north west][inner sep=0.75pt]  [font=\scriptsize]  {$2$};
\draw (167,152.4) node [anchor=north west][inner sep=0.75pt]  [font=\scriptsize]  {$1$};
\draw (141,182.4) node [anchor=north west][inner sep=0.75pt]  [font=\scriptsize]  {$0$};
\draw (414,82.4) node [anchor=north west][inner sep=0.75pt]  [font=\scriptsize]  {$-1$};
\draw (470,72.4) node [anchor=north west][inner sep=0.75pt]  [font=\scriptsize]  {$0$};
\draw (520,48.4) node [anchor=north west][inner sep=0.75pt]  [font=\scriptsize]  {$1$};
\draw (111,112.4) node [anchor=north west][inner sep=0.75pt]  [font=\scriptsize]  {$L( r_{L}) =0$};
\draw (484,116.4) node [anchor=north west][inner sep=0.75pt]  [font=\scriptsize]  {$L( r_{R}) =1$};
\draw (470,162.4) node [anchor=north west][inner sep=0.75pt]  [font=\scriptsize]  {$0$};
\draw (510,186.4) node [anchor=north west][inner sep=0.75pt]  [font=\scriptsize]  {$1$};
\draw (550,128.4) node [anchor=north west][inner sep=0.75pt]  [font=\scriptsize]  {$2$};

\end{tikzpicture}

\caption{RIII move and Boltzmann weights.}
\label{fig:RIII_move_twisted_virtual_Boltzmann_weight}
\end{figure}

In Figure \ref{fig:RIII_move_twisted_virtual_Boltzmann_weight}, we have RIII move with a specific orientation, where Boltzmann weights are indicated at the crossings. It is noteworthy that in left diagram, the Alexander numbering $\mathcal{L}(r_1)=0$ changes to $\mathcal{L}(r_2)=1$ during the move, while the remaining Alexander numbers remain unchanged. The product of Boltzmann weights in the left of the diagram is
\[L=
T^n( \phi(x,y) )  ~\phi(R_2(x,y),z) ~T^n(\phi( R_1(x, y ), R_1(R_2(x, y), z))),
\]
and the product of Boltzmann weights in the right of the diagram is
\[R=
\phi(y,z)~  T^n(\phi(x, R_1(y, z)))~ \phi(R_2(x, R_1(y, z)), R_2(y,z)).
\]
Since $\phi \in Z^2_{\TBQ, (n)}(X;M)$, thus $L=R$, and hence the state-sum remains unchanged under the illustrated RIII-move. The rest of the cases follow from the combinations with type II moves (see \cite{MR3013186,MR939474}).

Observe that altering the base region in a given knot diagram affects the state-sum through the action of $T^k$ for some $k \in \mathbb{Z}$. This completes the proof.
\end{proof}

\begin{figure}[H]
\begin{center}
\includegraphics[height=2in,width=5.5in,angle=00]{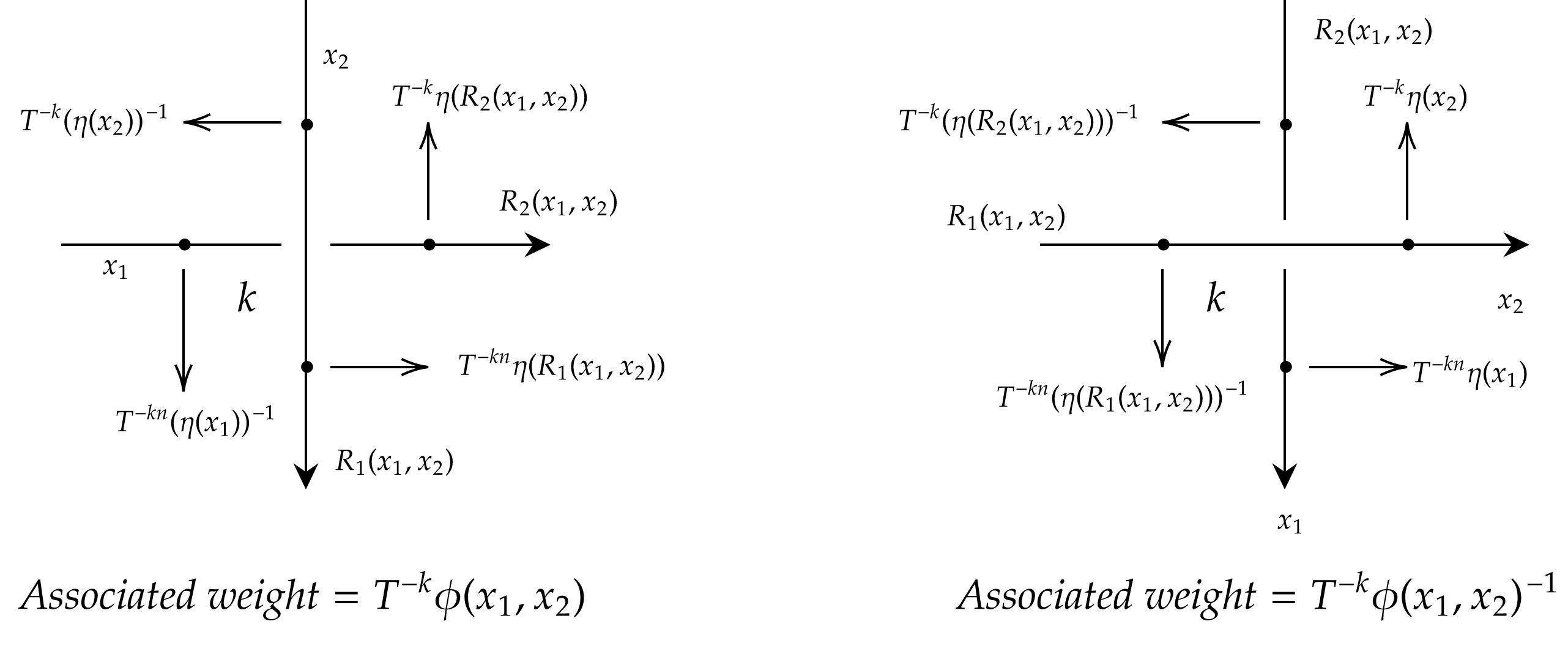}
\end{center}
\caption{Illustration of assigning labels to the end points near each crossing, where $k$ denotes the Alexander numbering of the crossing.}
\label{fig:assigning_of_labels_around_crossings}
\end{figure}

\begin{proposition}\label{prop:cohomologous_cycles_give_the_same}
Let $(X,f,R)$ be a twisted biquandle, $M$ a finite $\Lambda$-module, and $\phi \in Z^2_{\TBQ, (n)}(X;M)$. If $\phi$ is a coboundary, that is $\phi=\delta^2_{(n)}(\eta)$, where $\eta \in C^1_{\TBQ}(X;M)$, then the state-sum $\Phi_n(K)$ is the number of colorings of $K$ by the biquandle $(X,f,R)$.
\end{proposition}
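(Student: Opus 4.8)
The plan is to show that when $\phi=\delta^2_{(0,n)}(\eta)$ is a coboundary, the product $\prod_\tau B_\T(\tau,\mathcal{C})$ of twisted Boltzmann weights collapses to $1\in M$ for \emph{every} coloring $\mathcal{C}$; granting this,
\[
\Phi(K)=\sum_{\mathcal{C}}\prod_\tau B_\T(\tau,\mathcal{C})=\sum_{\mathcal{C}}1,
\]
which is precisely the number of colorings of $K$ by $(X,f,R)$.

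First I would make the coboundary explicit. Since $\delta^2_{(0,n)}=\Hom(\partial_2^{(0,n)},\id_M)$, the same computation that produced the $1$-cocycle condition gives, for all $x,y\in X$,
\[
\phi(x,y)=\delta^2_{(0,n)}(\eta)(x,y)=-\eta(y)+T^{n}\big(\eta(R_1(x,y))\big)+\eta(R_2(x,y))-T^{n}\big(\eta(x)\big)
\]
(written additively; in the multiplicative notation of the state-sum each summand is a factor and each sign an inverse). Substituting into the definition of the twisted Boltzmann weight at a positive crossing $\tau$ with under-color $x$, over-color $y$ and Alexander number $k=\mathcal{L}(\tau)$ yields
\[
B_\T(\tau,\mathcal{C})=T^{-nk}\big(\phi(x,y)\big)=-T^{-nk}\eta(y)+T^{-n(k-1)}\eta(R_1(x,y))+T^{-nk}\eta(R_2(x,y))-T^{-n(k-1)}\eta(x),
\]
so $B_\T(\tau,\mathcal{C})$ decomposes into four terms, one attached to each of the four semi-arcs meeting at $\tau$, each of the form $\pm\,T^{-n\ell}\,\eta(c)$ with $c$ the color of the semi-arc and $\ell$ the Alexander number of the region it bounds. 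The two ``under'' terms (carrying $x$ and $R_1(x,y)$) are shifted by $T^{n}$ relative to the two ``over'' terms, reflecting the jump in Alexander numbering across the over-strand. For a negative crossing the same decomposition holds after replacing $R$ by $\overline R$ and inverting, following the coloring and weight conventions of Figures \ref{fig:coloring_rules} and \ref{fig:associated_virtua_boltzman_weight}.

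Next I would organize these four terms as labels attached to the endpoints of semi-arcs, exactly as in Figure \ref{fig:assigning_of_labels_around_crossings}. The key claim is that the label $T^{-n\ell(e)}\eta(\mathcal{C}(e))^{\pm1}$ read off at an endpoint of a semi-arc $e$ depends only on $e$ --- on its color $\mathcal{C}(e)$ and on the Alexander number $\ell(e)$ of a fixed region bordering it --- and not on the crossing from which it is read. Both the color and the bordering regions are constant along $e$, so this reduces to checking that the $T$-power assigned at the two ends of $e$ agree; here the internal $T^{n}$ of the coboundary and the external factor $T^{-n\mathcal{L}(\tau)}$ of the Boltzmann weight must conspire, exactly as the Alexander numbering changes from $0$ to $1$ in the RIII computation of Figure \ref{fig:RIII_move_twisted_virtual_Boltzmann_weight}. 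Assuming this consistency, each semi-arc $e$ is incident to exactly two endpoints at which it contributes mutually inverse labels (it appears once as an outgoing arc and once as an incoming arc), so in $\prod_\tau B_\T(\tau,\mathcal{C})$, regrouped as a product over semi-arcs, every factor cancels its partner and the whole product telescopes to $1\in M$. This holds for each coloring $\mathcal{C}$, giving the stated count.

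The step I expect to be the main obstacle is the endpoint-consistency in the previous paragraph: verifying that the Alexander-number weights at the two ends of each semi-arc genuinely coincide, uniformly over positive and negative crossings. This is a finite combinatorial check on how the Alexander numbering of the four regions at a crossing shifts across the over- and under-strands, together with the sign and inverse bookkeeping forced by the $\overline R$ coloring rule at negative crossings. Once this is confirmed the telescoping, and hence the whole statement, follows at once.
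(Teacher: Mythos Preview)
Your proposal is correct and follows essentially the same approach as the paper: both write out the coboundary formula for $\phi$, decompose each twisted Boltzmann weight into four labels attached to the semi-arc endpoints at a crossing (this is exactly what the paper's Figure~\ref{fig:assigning_of_labels_around_crossings} encodes), and then observe that the two labels sitting at the ends of any semi-arc are mutually inverse, so the product over all crossings telescopes to $1$. You are more explicit than the paper about the role of the Alexander numbering in making the $T$-powers match up along a semi-arc, which the paper leaves entirely to the figure; otherwise the arguments coincide.
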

\begin{proof}
For all $x, y \in X$, we have
\[\phi(x, y)=\delta^2_{(0,n)}(\eta)(x, y)=(\eta(y))^{-1} T^n(\eta(R_1(x, y))  \eta(R_2(x, y))  T^n((\eta(x))^{-1}).\]
Now for a knot diagram $K$ and a coloring $\mathcal{C}$, we assign labels around each crossing as illustrated in Figure \ref{fig:assigning_of_labels_around_crossings}. The product of the labels around each crossing equals the Boltzmann weight assigned to that crossing. Moreover, the labels between any two consecutive crossings are multiplicative inverse of each other. Consequently, the product of all the Boltzmann weights in $K$ for the coloring $\mathcal{C}$ is $1$. Therefore, the state-sum $\Phi_n(K)$ of $K$ is the number of colorings of $K$ by $(X,f,R)$.
\end{proof}

\begin{example}
Consider the Hopf link $K$ as depicted in Figure \ref{fig:hopf_link_with_Alexander_numbering}, accompanied by the Alexander numbering of regions. Let $X$ be a twisted quandle $(T_2, (0~1), *)$, where $T_2$ is the trivial quandle with two elements, and $M$ be the module $\Lambda/\langle 1-T^2 \rangle$. As an abelian group, $M$ is generated $1$ and $T$, denoted multiplicatively as $u$ and $v$, respectively. Consider a $2$-cocycle $\phi \in Z^2_{\TBQ, (1)}(X;M)$ given by $\phi = u^{\chi(0,1)}v^{\chi(1,0)}$. Then, the state-sum invariant is $\Phi_1(K)=2 + 2uv$. Note that $T\Phi_1(K)=\Phi_1(K)$.
\begin{figure}[H]
\begin{center}
\includegraphics[height=1in,width=2in,angle=00]{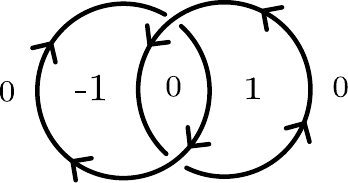}
\end{center}
\caption{Hopf link with Alexander numbering.}
\label{fig:hopf_link_with_Alexander_numbering}
\end{figure}
\end{example}

\section{Twisted biquandle cocycle invariants of knotted surfaces}\label{sec:twisted_biquandle_cocycle_invariants_of_knotted_surfaces}
In this section, we define the state-sum invariant of knotted surfaces utilizing broken surface diagrams. 

A knotted surface is defined as a smooth embedding of an orientable closed surface in $\mathbb{R}^4$. Analogous to knot diagrams, a knotted surface can be represented by its generic projection onto $\mathbb{R}^3$, including relative height information. Such projection diagrams are called {\it broken surface diagrams}. Locally, these diagrams are shown in Figure \ref{fig:broken_surface_diagrams}, illustrating double curve, triple point and isolated branch point.

\begin{figure}[H]
\begin{center}
\includegraphics[height=3in,width=3in,angle=00]{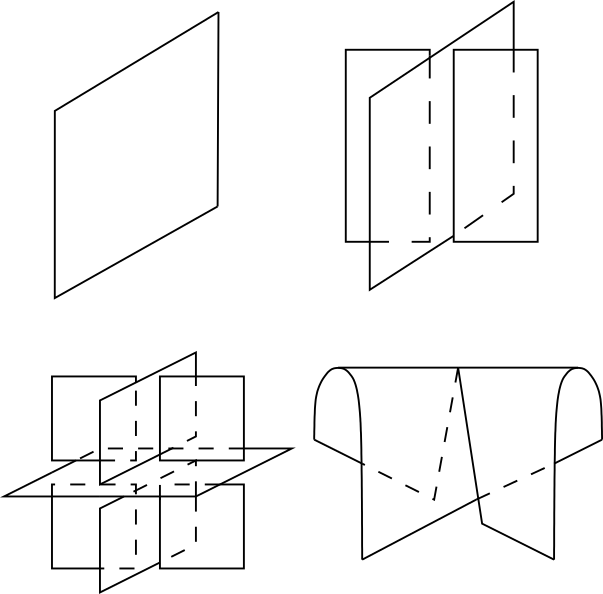}
\end{center}
\caption{Local pictures of broken surface diagrams}
\label{fig:broken_surface_diagrams}
\end{figure}

Similar to knot diagrams, broken surface diagrams are employed to define invariants of knotted surfaces. Examples of such invariants include, for instance, quandle colorings, biquandle colorings, state sum invariants \cite{MR1990571,MR3868945}, and fundamental biquandles \cite{MR3190125}.

For a given finite twisted biquandle $(X,f,R)$, the coloring rule of a broken surface diagram is defined using normals, as depicted in Figure \ref{fig:coloring_rule_surface}.

\begin{figure}[H]
\begin{center}
\includegraphics[height=1.5in,width=2in,angle=00]{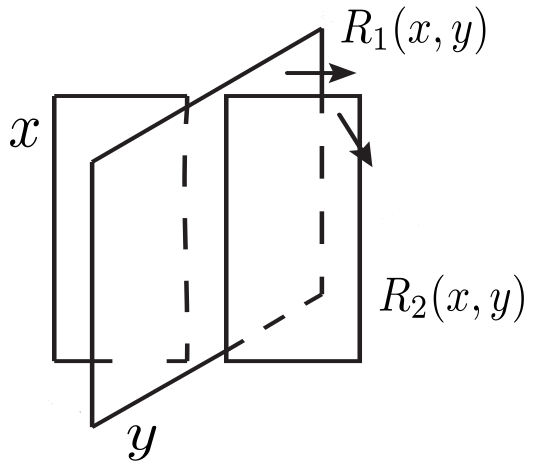}
\end{center}
\caption{Coloring rule of broken surface diagrams.}
\label{fig:coloring_rule_surface}
\end{figure}

\begin{figure}[H]
\begin{center}
\includegraphics[height=2in,width=6in,angle=00]{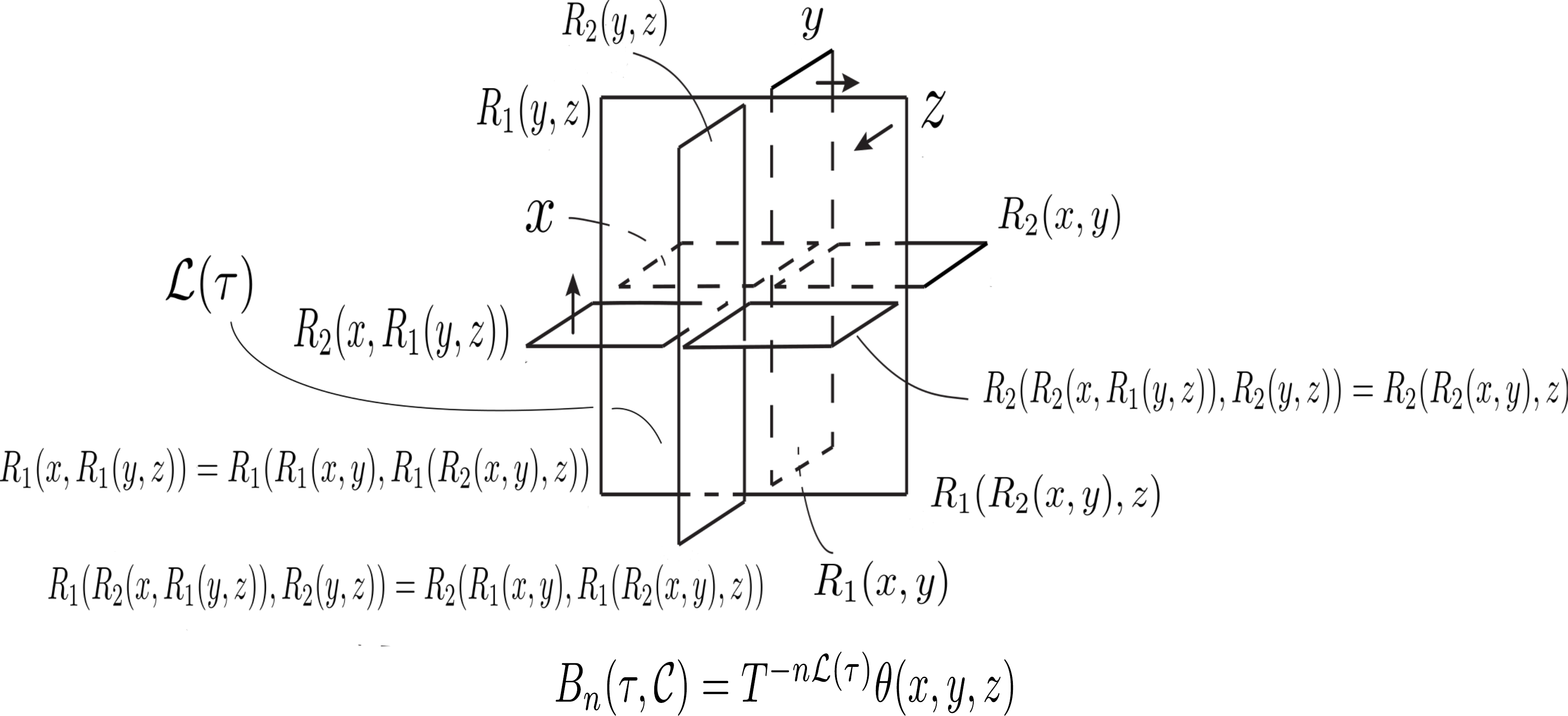}
\end{center}
\caption{Assignment of Boltzmann weight to the triple point $\tau$.}
\label{fig:knotted_weight}
\end{figure}

Analogous to the case of knots, the state sum invariant is defined as follows: Let $K$ be a broken surface diagram and $\mathcal{C}$ be a coloring of $L$ by twisted biquandle $(X,f,R)$. For a triple point $\tau$ in $K$, the source region $R$ and the Alexander numbering $\mathcal{L}(\tau)=\mathcal{L}(R)$ are defined similar to knots (see \cite{MR1885217, MR1487374} for more details). Let  $\theta \in Z^3_{\TBQ, (n)}(X;M)$ be a $3$-cocycle. Then, for  each triple point $\tau$ with Alexander number $\mathcal{L}(\tau)$, assign a weight $B_{n}(\tau, \mathcal{C})=T^{-n\mathcal{L}(\tau)} \theta(x,y,z)^{\epsilon(\tau)}$, where $\epsilon(\tau)$ is the sign of the triple point $\tau$ (see \cite{MR1487374} for details). An illustration is provided in Figure \ref{fig:knotted_weight}. The state sum is defined as follows: \[\Phi_n(K)=\sum_{\mathcal{C}}\prod_{\tau}B_{n}(\tau, \mathcal{C}).\]

By checking the invariance of $\Phi_n(K)$ under the Roseman moves for broken surface diagrams, we obtain the following.

\begin{theorem}
Let $K_1$ and $K_2$ be broken surface diagrams representing the same knotted surface. Then the state-sum invariant is well defined up to the action of $\mathbb{Z}=\langle T \rangle$. In other words $\Phi_n(K_1)= T^k\Phi_n(K_2)$ for some $k \in \mathbb{Z}$.
\end{theorem}

The proof of the following proposition employs a similar argument to that of Proposition \ref{prop:cohomologous_cycles_give_the_same}.
\begin{proposition}
Let $(X,f,R)$ be a twisted biquandle, $M$ a finite $\Lambda$-module, and $\phi \in Z^3_{\TBQ, (n)}(X;M)$. If $\phi$ is a coboundary, that is $\phi=\delta^3_{(n)}(\eta)$, where $\eta \in C^2_{\TBQ}(X;M)$, then the state-sum $\Phi_n(K)$, of a broken surface diagram $K$, is the number of colorings of $K$ by $(X,f,R)$.
\end{proposition}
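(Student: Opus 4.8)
The plan is to mimic the argument of Proposition~\ref{prop:cohomologous_cycles_give_the_same}, replacing crossings by triple points, arcs by double-point curves, and the $1$-cochain telescoping by a $2$-cochain telescoping. First I would write out the $3$-coboundary explicitly: for $\eta \in C^2_{\TBQ}(X;M)$ and $(x,y,z)\in X^3$,
\[
\theta(x,y,z)=\delta^3_{(0,n)}(\eta)(x,y,z)
\]
is the multiplicative alternating product of the six faces appearing in the cocycle condition~\eqref{eq:2-cocycle}, namely the terms
\[
T^n\eta(R_1(x,y),R_1(R_2(x,y),z)),\ \eta(R_2(x,y),z),\ T^n\eta(x,y)
\]
together with the inverses of
\[
\eta(y,z),\ T^n\eta(x,R_1(y,z)),\ \eta(R_2(x,R_1(y,z)),R_2(y,z)).
\]
These six arguments are precisely the six color pairs read off the double-point sheets meeting at a triple point $\tau$ colored by $(x,y,z)$, as recorded in Figure~\ref{fig:knotted_weight}.

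Next, following the labeling recipe of Figure~\ref{fig:assigning_of_labels_around_crossings}, I would attach to each small piece of double-point curve abutting $\tau$ the corresponding factor of $\theta(x,y,z)^{\epsilon(\tau)}$, twisted by $T^{-n\mathcal{L}(\tau)}$; by construction the product of the labels placed around $\tau$ is exactly the twisted Boltzmann weight $B_{\T}(\tau,\mathcal{C})=T^{-n\mathcal{L}(\tau)}\big(\theta(x,y,z)^{\epsilon(\tau)}\big)$. The heart of the argument is a telescoping statement: along any double-point curve joining two triple points, the $2$-cochain value read from one endpoint agrees with that read from the other, while the two local Alexander numberings differ by exactly the amount absorbed by the interior $T^{\pm n}$ factors of the coboundary. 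Hence the two contributions are mutually inverse in $\mathbb{Z}[M]$ and cancel, just as the labels between consecutive crossings cancel in the classical case.

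It remains to treat double-point curves that terminate at a branch point rather than at a second triple point. At such a point two of the colors coincide in the degenerate way $R(a,b)=(a,b)$; since $\eta$ is a $\TBQ$-cochain it factors through $C_2^{\TBQ}(X)=C_2^{\TYB}(X)/C_2^{\TD}(X)$ and therefore vanishes on degenerate pairs, so the label at that end is trivial. Because a knotted surface is an embedding of a \emph{closed} surface, the double-point set is a closed union of curves, so every end of every double curve is matched either with another end (and cancels) or with a branch point (and is trivial). Thus for each fixed coloring $\mathcal{C}$ the total product collapses,
\[
\prod_{\tau}B_{\T}(\tau,\mathcal{C})=1,
\]
and summing over colorings gives $\Phi(K)=\sum_{\mathcal{C}}1$, the number of colorings of $K$ by $(X,f,R)$.

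The step I expect to be the main obstacle is the geometric bookkeeping underlying the telescoping: one must check that the identification of the six coboundary faces with the six sheets at a triple point respects the normal and orientation conventions of Figure~\ref{fig:coloring_rule_surface}, and that the local Alexander numbers at the two ends of a shared double curve genuinely differ by the integer dictated by the interior $T^{\pm n}$ factors. Once this matching is established the cancellation is forced, and the remaining verifications---triviality at branch points and the arithmetic in $\mathbb{Z}[M]$---are routine.
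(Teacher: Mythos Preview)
Your proposal is correct and follows exactly the approach the paper intends: the paper's own proof of this proposition consists solely of the sentence ``uses a similar argument as in Proposition~\ref{prop:cohomologous_cycles_give_the_same},'' and your sketch is precisely that analogous argument, with crossings replaced by triple points, arcs by double-point curves, and the $1$-cochain telescoping by a $2$-cochain telescoping. Your handling of branch points via the degeneracy condition on $\eta\in C^2_{\TBQ}(X;M)$ and your identification of the Alexander-numbering bookkeeping as the main verification are both apt; in fact your write-up already contains more detail than the paper supplies.
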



\section{Computing state-sum using braid charts}\label{sec:computations_using_braid_charts}
In this section, we introduce a method for calculating the state-sum invariant of oriented surface knot $K$ using braid charts, which represents a surface braid whose closure is $K$. This method is inspired and shares similarities with the approach outlined in \cite{MR1990571}.

Let $S$ a surface braid, and let $\Gamma$ a braid chart representing $S$. Let $Q(S)$ denote the fundamental quandle associated with $S$. For detailed instructions on composing a presentation of $Q(S)$ through braid charts, we refer to \cite[Section 10]{MR1990571}. Each white vertex in $\Gamma$ corresponds to a triple point in $S$. These vertices are adjacent to six edges, with labels alternating between $p$ and $p+1$ for some $p$. Additionally, we label each white vertex with the braid index of the corresponding triple point as depicted in Figure \ref{fig:braid_index}.

\begin{figure}[H]
\begin{center}

\includegraphics[height=2in,width=4in,angle=00]{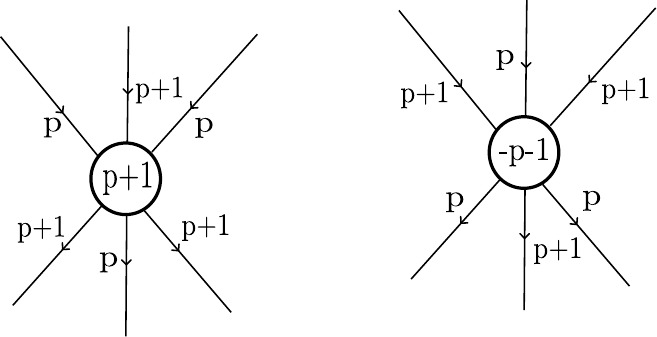}
\end{center}
\caption{Braid index of a white vertex.}
\label{fig:braid_index}
\end{figure}
Let $X=(Y,f,*)$ be a finite twisted quandle, and $\theta \in Z^3_{\TBQ, (n)}(X;M)$, where $M$ a finite $\Lambda$-module. Suppose $\mathcal{C}: Q(S) \to Y$ represents a quandle homomorphism. Then, for each white vertex $W$ with braid index $b(W)$, assign a {\it Boltzmann weight}
\[
B_n(W, \mathcal{C})= T^{-nb(W)} \theta(x, y,z)^{\epsilon(W)},
\]
where $\epsilon(W)$ denotes the sign of $W$, and $(x,y,z)$ is the quandle triple for $W$. Now, define the state-sum of $\Gamma$ as
\[
\Phi_n(\Gamma)= \sum_{\mathcal{C}} \prod_{W} B_n(W, \mathcal{C}),
\]
where $W$ varies over all the white vertices in $\Gamma$, and $\mathcal{C}$ varies over all the quandle homomorphisms from $Q(S)$ to $Y$.

The following results relates state-sum of the chart $\Gamma$ to the state-sum of the knotted surface $\hat{S}.$

\begin{theorem}\label{thm:surface_braid}
Let $S$ be a surface braid represented by a chart $\Gamma$ and let $\hat S$ be the closure of $S$. Then $\Phi_n(\hat{S})=\Phi_n(\Gamma)$.
\end{theorem}
\begin{proof}
The proof follows from Lemma \cite[Lemma 11.1]{MR1990571} and the observation that the braid index of a white vertex is the Alexander numbering of the corresponding triple point in $\hat{S}$ (see \cite[Section 3]{MR1695171}).
\end{proof}
\begin{example}
Consider the braid chart $\Gamma$ depicted in Figure \ref{fig:braid_chart}, representing a surface braid $S$ whose closure forms the $2$-twist spun trefoil. In \cite[Section 11]{MR1990571} it is shown that the fundamental quandle $Q(S)$ of $S$ is
\[
\langle x_1, x_2 ~|~ x_2=(x_1*x_2)*x_1, ~x_2=(x_2*x_1)*x_1 \rangle,
\]
where the quandle triples of the white vertices $W_1, \ldots, W_6$ are 
\[
(x_1*x_2, x_1, x_2), ~(x_1*x_2, x_2, x_1*x_2), ~(x_2, x_1*x_2, x_1), (x_1, x_1*x_2, x_2), (x_1*x_2, x_1, x_1*x_2), (x_1, x_2, x_1*x_2), 
\]
respectively. The sign of the white vertices are
\[
\epsilon(W_1)=\epsilon(W_2)=\epsilon(W_3)=+1,~~\epsilon(W_4)=\epsilon(W_5)=\epsilon(W_6)=-1.
\]
Moreover, the numbers in the white vertices represents their corresponding braid index. By Theorem \ref{thm:surface_braid}, for $\theta \in Z^3_{\TBQ, (n)}(X;M)$
\begin{equation}
    \begin{split}
\Phi_n(\hat S)=&\sum_{\mathcal{C}} T^{-2n}\theta(\mathcal{C}(x_1*x_2), \mathcal{C}(x_1), \mathcal{C}(x_2))\times T^{-2n}\theta(\mathcal{C}(x_1*x_2), \mathcal{C}(x_2), \mathcal{C}(x_1*x_2))\\ &\times T^{-3n} \theta(\mathcal{C}(x_2), \mathcal{C}(x_1*x_2), \mathcal{C}(x_1)) \times T^{3n}\theta(\mathcal{C}(x_1), \mathcal{C}(x_1*x_2), \mathcal{C}(x_2))^{-1} \\ &\times T^{2n} \theta(\mathcal{C}(x_1*x_2), \mathcal{C}(x_1), \mathcal{C}(x_1*x_2))^{-1} \times T^{2n} \theta(\mathcal{C}(x_1), \mathcal{C}(x_2), \mathcal{C}(x_1*x_2))^{-1}
\end{split}
\end{equation}

\begin{figure}[H]
\begin{center}

\includegraphics[height=4.5in,width=4in,angle=00]{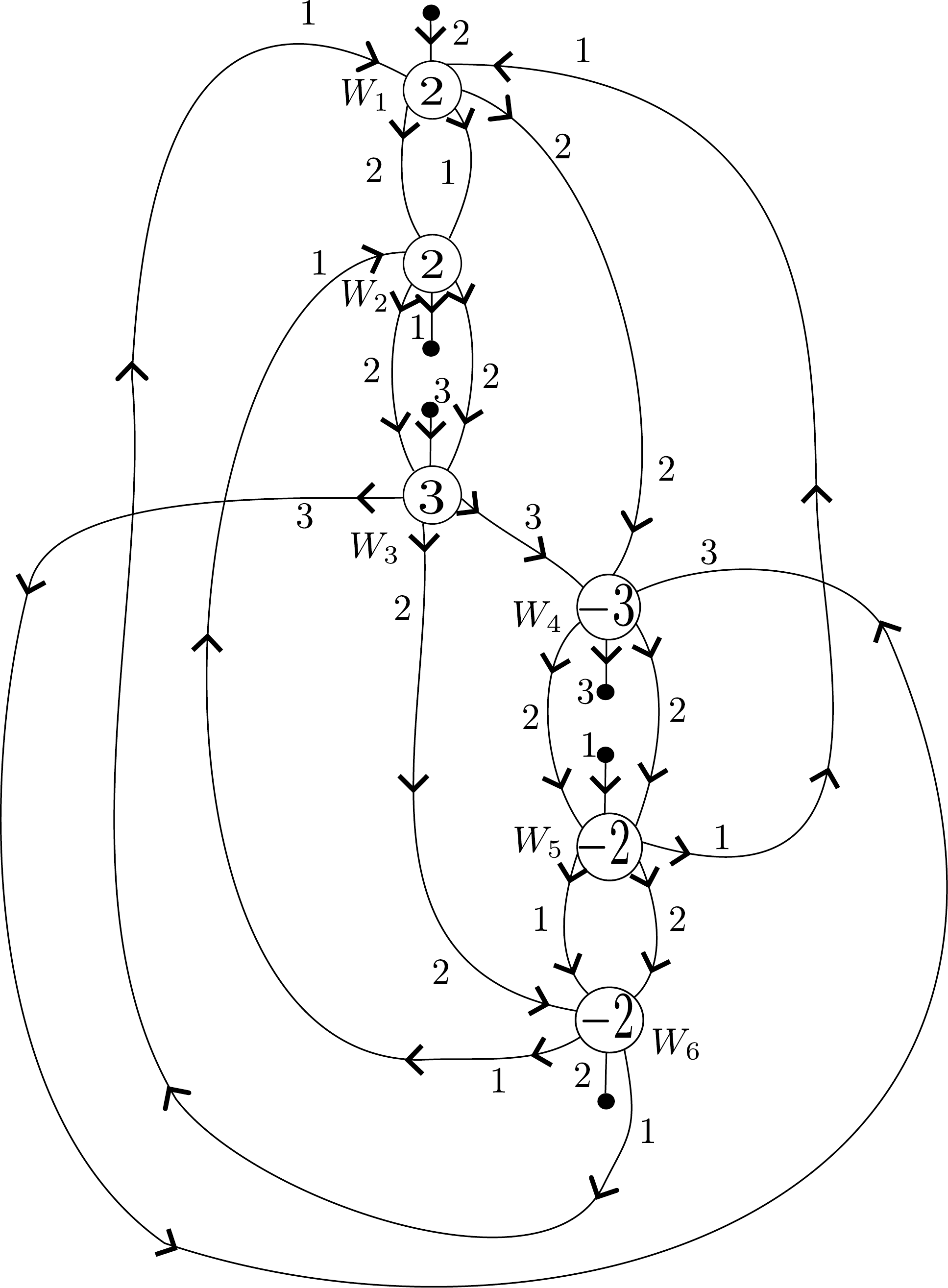}
\end{center}
\caption{Braid chart $\Gamma$.}
\label{fig:braid_chart}
\end{figure}

\begin{table}[H]
\caption{~}
\label{table:1}
$\begin{array}{ |>{\centering\arraybackslash$} p{.15cm} <{$} | >{\centering\arraybackslash$} p{.17cm} <{$} ||>{\centering\arraybackslash$} p{1.8cm} <{$} | >{\centering\arraybackslash$} p{2.2cm} <{$} |>{\centering\arraybackslash$} p{1.9cm} <{$} | >{\centering\arraybackslash$} p{2.0cm} <{$} |>{\centering\arraybackslash$} p{2.2cm} <{$} |>{\centering\arraybackslash$} p{2.2cm} <{$}|| >{\centering\arraybackslash$} p{.3cm} <{$} |}
 \hline
&&&&&&&&\\[-1em]
 y_1 & y_2 & T^{-2}\theta(y_1*y_2, y_1, y_2) & T^{-2}\theta(y_1*y_2, y_2, y_1*y_2) & T^{-3}\theta(y_2, y_1*y_2, y_1) & T^3\theta(y_1, y_1*y_2,y_2)^{-1} & \begin{aligned}[t] &T^2\theta(y_1*y_2, \\&y_1, y_1*y_2)^{-1} \end{aligned} & T^2 \theta(y_1, y_2, y_1*y_2)^{-1} & \prod \\
 \hline
0 & 0 & 1 & 1 & 1 & 1 &1 &1 &1\\
\hline
&&&&&&&&\\[-1em]
0 &1 &uv & 1 & 1 &u^2 v^2 & u^2 v^2 & u^2 v^2 & uv\\
\hline
&&&&&&&&\\[-1em]
0 & 2  &uv & 1 & 1 &u^2 v^2 & u^2 v^2 & u^2 v^2 & uv\\
\hline
&&&&&&&&\\[-1em]
1 & 0 & 1 & uv & uv & 1 & 1 & u^2v^2 & uv\\
\hline
1 &1 & 1 & 1 & 1 & 1 &1 &1 &1\\
\hline
&&&&&&&&\\[-1em]
1 & 2 & uv & 1 & uv & u^2v^2 & 1 & 1 &uv\\
\hline
&&&&&&&&\\[-1em]
2 & 0 & 1 & uv & uv &1 & 1 & u^2 v^2 & uv\\
\hline
&&&&&&&&\\[-1em]
2 & 1 & uv & 1 & uv & u^2 v^2 & 1 & 1 &uv\\
\hline
2 & 2 & 1 &1 & 1& 1& 1& 1 &1\\
\hline
\end{array}$

\end{table}

In particular, consider $X$ to be the twisted quandle $(R_3, (1~2), *)$, where $R_3$ is the diherdal quandle on three elements $\{0,1,2\}$, and $M$ to be $\Lambda_3/\langle T^2-1 \rangle$. As an abelian group $M$ is generated by $1$ and $T$, denoted multiplicatively by $u $ and $v$, respectively. Consider a $3$-cocycle $\theta \in Z^3_{\TBQ, (1)}(X;M)$, where
\begin{equation}\label{eq:theta}
\theta=
(uv)^{\chi(0,1,2) + \chi(0,2,1) + \chi(1,0,1) +\chi(2,0,2) + \chi(1,0,2) + \chi(2,0,1)}.
\end{equation}
On considering all the possible colorings from $Q(S)$ to $R_3$ shown in Table \ref{table:1}, where $y_1=\mathcal{C}(x_1)$ and $y_2=\mathcal{C}(x_2)$, the state-sum invariant of the $2$-twist spun trefoil is
$$\Phi_1(\hat{S})= 3 +6uv.$$
\end{example}

\begin{example}
Now consider the braid chart $\Gamma'$ shown in Figure \ref{fig:braid_chart}, representing a surface braid $S'$ whose closure is the reverse oriented $2$-twist spun trefoil. The fundamental quandle $Q(S')$ (see \cite[Section 11]{MR1990571}) of $S'$ is
\[
\langle x_1, x_2 ~|~ x_2=(x_1*x_2)*x_1, ~x_2=(x_2*x_1)*x_1 \rangle,
\]
where the quandle triples of the white vertices $W_1, \ldots, W_6$ are 
\[
(x_2, x_1*x_2, x_1), ~(x_2, x_1, x_2), ~(x_1, x_2, x_1*x_2), (x_1*x_2, x_2, x_1), (x_2, x_1*x_2, x_2), (x_1*x_2, x_1, x_2), 
\]
respectively. The sign of the white vertices are
\[
\epsilon(W_1)=\epsilon(W_2)=\epsilon(W_3)=-1,~~\epsilon(W_4)=\epsilon(W_5)=\epsilon(W_6)=1.
\]
The numbers in the white vertices represents their corresponding braid index. By Theorem \ref{thm:surface_braid}, for $\theta \in Z^3_{\TBQ, (n)}(X;M)$
\begin{equation}
    \begin{split}
\Phi_n(\hat{S'})=&\sum_{\mathcal{C}} T^{-2n}\theta(\mathcal{C}(x_2), \mathcal{C}(x_1*x_2), \mathcal{C}(x_1))^{-1}\times T^{-2n}\theta(\mathcal{C}(x_2), \mathcal{C}(x_1), \mathcal{C}(x_2))^{-1}\\ &\times T^{-3n} \theta(\mathcal{C}(x_1), \mathcal{C}(x_2), \mathcal{C}(x_1*x_2))^{-1} \times T^{3n}\theta(\mathcal{C}(x_1*x_2), \mathcal{C}(x_2), \mathcal{C}(x_1)) \\ &\times T^{2n} \theta(\mathcal{C}(x_2), \mathcal{C}(x_1*x_2), \mathcal{C}(x_2)) \times T^{2n} \theta(\mathcal{C}(x_1*x_2), \mathcal{C}(x_1), \mathcal{C}(x_2))
\end{split}
\end{equation}

\begin{figure}[H]
\begin{center}

\includegraphics[height=4.5in,width=4in,angle=00]{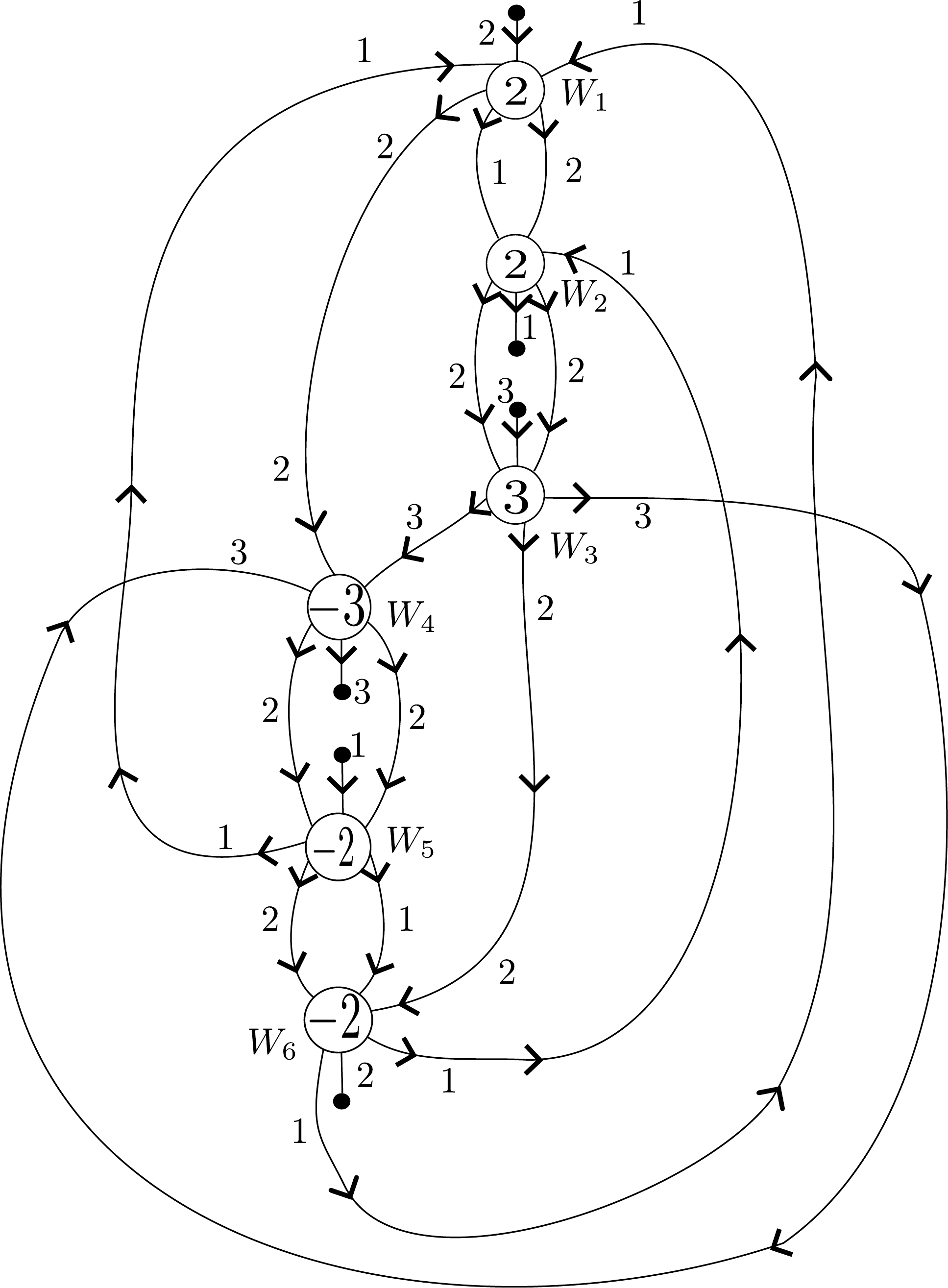}
\end{center}
\caption{Braid chart $\Gamma'$.}
\label{fig:braid_chart_reverse}
\end{figure}

\begin{table}[H]
\caption{~}\label{table:2}
$\begin{array}{ |>{\centering\arraybackslash$} p{.15cm} <{$} | >{\centering\arraybackslash$} p{.17cm} <{$} ||>{\centering\arraybackslash$} p{2cm} <{$} | >{\centering\arraybackslash$} p{2cm} <{$} |>{\centering\arraybackslash$} p{2cm} <{$} | >{\centering\arraybackslash$} p{2.0cm} <{$} |>{\centering\arraybackslash$} p{2.0cm} <{$} |>{\centering\arraybackslash$} p{2.0cm} <{$}|| >{\centering\arraybackslash$} p{.6cm} <{$} |}
 \hline
&&&&&&&&\\[-1em]
 y_1 & y_2 & T^{-2}\theta(y_2, y_1*y_2, y_1)^{-1} &
\begin{aligned}[t]
& T^{-2}\theta(y_2,\\
& y_1, y_2)^{-1}
\end{aligned} & \begin{aligned}[t] &T^{-3}\theta(y_1,\\ &y_2, y_1*y_2)^{-1} \end{aligned} & T^3\theta(y_1*y_2, y_2, y_1)& T^2\theta(y_2, y_1*y_2, y_2) & T^2 \theta(y_1*y_2, y_1, y_2) & \prod \\
 \hline
0 & 0 & 1 & 1 & 1 & 1 &1 &1 &1\\
\hline
&&&&&&&&\\[-1em]
0 & 1 & 1 & u^2 v^2 & u^2 v^2 & 1 & 1 & uv & u^2 v^2\\
\hline
&&&&&&&&\\[-1em]
0 & 2 & 1 & u^2 v^2 & u^2 v^2 & 1 & 1 & uv & u^2 v^2\\
\hline
&&&&&&&&\\[-1em]
1 & 0 & u^2 v^2 & 1 & u^2v^2 & uv & 1 & 1 & u^2 v^2\\
\hline
1 & 1& 1 & 1 & 1 & 1 & 1 & 1 &1\\
\hline
&&&&&&&&\\[-1em]
1 & 2 & u^2v^2 & 1 &1 & uv & uv & uv & u^2 v^2\\
\hline
&&&&&&&&\\[-1em]
2 & 0 &  u^2 v^2 & 1 & u^2v^2 & uv & 1 & 1 & u^2 v^2\\
\hline
&&&&&&&&\\[-1em]
2 & 1 & u^2v^2 & 1 &1 & uv & uv & uv & u^2 v^2\\
\hline
&&&&&&&&\\[-1em]
2 & 2 & 1 & 1 & 1 & 1 & 1 & 1 & 1\\
\hline
\end{array}$
\end{table}
In particular, take $X=(R_3, (1~2), *)$ and $M=\Lambda_3/\langle T^2-1 \rangle$, and $\theta \in Z^3_{\TBQ, (1)}(X;M)$ as in Equation \eqref{eq:theta}. On considering all the possible colorings shown in Table \ref{table:2}, where $y_1=\mathcal{C}(x_1)$ and $y_2=\mathcal{C}(x_2)$, the state-sum invariant of the reverse oriented $2$-twist spun trefoil is
$$\Phi_1(\hat{S'})= 3 +6u^2v^2.$$

\end{example}
\section{Concluding remarks}\label{sec:concluding_remarks}
Analogous to Section \ref{sec:twisted_biquandle_cocycle_invariants_of_classical_knots} and Section \ref{sec:twisted_biquandle_cocycle_invariants_of_knotted_surfaces}, twisted biquandle cocycles can be used to defined invariants for knot diagrams on compact oriented surfaces up to Reidemeister moves and broken surface diagrams in compact oriented $4$-manifolds up to Roseman moves. Here we briefly describe the process for knots on compact surfaces.

Let $K$ be an oriented knot diagram on a compact oriented surface $S$, and a finite twisted biquandle $(X,f,R)$, where the order of $f$ is $p \in \mathbb{Z}_{\geq 0}$. Let $\phi \in Z^2_{\TBQ, (0,n)}(X;M)$, where $M$ is a $\mathbb{Z}[T, T^{-1}]$-module and $\mathcal{C}$ a coloring of $K$ by $(X,f,R)$. The diagram $K$ divides $S$ into regions. Fix a base region denoted by $R_0$, and define $(\mod p)$-{\it Alexander numbering} of the regions (and crossings) as done in Section \ref{sec:twisted_biquandle_cocycle_invariants_of_classical_knots}, where $\mathcal{L}(R_0)=0 \mod p$. If $(\mod p)$-Alexander numbering is not defined, then set the state sum invariant $\Phi(K)$ to be $0$. Otherwise, to each crossing $\tau$, assign a {\it twisted Boltzmann weight} $B_{\T}(\tau, \mathcal{C})=T^{-n\mathcal{L}(\tau)}(\phi(x,y))^{\epsilon(\tau)}$ as done in Section \ref{sec:twisted_biquandle_cocycle_invariants_of_classical_knots}, and define the {state-sum}
\[
\Phi(K)=\sum_{\mathcal{C}}\prod_{\tau}B_{\T}(\tau, \mathcal{C})
\]
Note that $\Phi(K)$ depends on the choice of the base region $R_0$. To overcome this, we consider $\Phi(K)$ up to action of the free abelian group generated by $T$. Thus we have the following result.

\begin{theorem}
The state-sum is well defined up to the action of $\mathbb{Z}=<T>$ for knots on surfaces.
\end{theorem}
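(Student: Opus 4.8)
The plan is to prove two complementary facts: for a \emph{fixed} base region $R_0$ the state-sum is invariant under Reidemeister moves, and changing the base region alters $\Phi(K)$ only by a power of $T$. Together these say that the class of $\Phi(K)$ in $\mathbb{Z}[M]$ modulo the action of $\langle T\rangle$ is a well-defined invariant of the knot on $S$. First I would isolate the algebraic fact that legitimises the $(\mathrm{mod}\ p)$-numbering: since $\phi\in Z^2_{\TBQ,(0,n)}(X;M)$ is a $\mathbb{Z}[T,T^{-1}]$-module map and $f$ has order $p$, we have $T^{p}\big(\phi(x,y)\big)=\phi\big(f^{p}(x),f^{p}(y)\big)=\phi(x,y)$ for all $x,y\in X$. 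Hence every weight $B_{\T}(\tau,\mathcal{C})=T^{-n\mathcal{L}(\tau)}\big(\phi(x,y)^{\epsilon(\tau)}\big)$ depends only on $\mathcal{L}(\tau)$ taken mod $p$, so the $(\mathrm{mod}\ p)$-Alexander numbering genuinely determines the weights.

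For the base-region dependence, let $R_0'$ be a second base region and put $k=\mathcal{L}(R_0')\in\mathbb{Z}_p$ for its value in the original numbering. Two numberings satisfying the same jump conditions across $K$ differ by a global constant, so passing to $R_0'$ shifts the numbering of every region, hence of every crossing, by the constant $-k\pmod p$, and each weight is replaced by $T^{nk}\big(B_{\T}(\tau,\mathcal{C})\big)$. Because $T$ acts on $M$ by an automorphism, it commutes with the (multiplicatively written) product over crossings; thus for each coloring $\prod_{\tau}B_{\T}(\tau,\mathcal{C})$ is replaced by its image under $T^{nk}$, and summing over colorings yields the new state-sum as $T^{nk}\!\cdot\Phi(K)$ under the induced action of $T$ on $\mathbb{Z}[M]$. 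Any two base regions therefore give state-sums differing by a power of $T$, which is exactly the asserted well-definedness up to $\langle T\rangle$.

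It remains to check invariance under Reidemeister moves with $R_0$ fixed, and here I would follow the classical argument of Section~\ref{sec:twisted_biquandle_cocycle_invariants_of_classical_knots} essentially verbatim. Each move is supported in a disk, which (using the previous paragraph) we may assume disjoint from $R_0$; outside the disk the regions, adjacencies, and numberings are unchanged, while inside it the local numbers are forced by the ambient ones. The RI case uses that $\phi$ vanishes on biquandle fixed points; the RII case uses the cancellation of the oppositely signed weights sharing a source region; and the RIII case is precisely the cocycle identity \eqref{eq:2-cocycle}. Two genuinely surface-theoretic points must be added: the existence of a $(\mathrm{mod}\ p)$-Alexander numbering depends only on the class of the co-oriented curve $K$ in $H_1(S;\mathbb{Z}_p)$, hence is unchanged by Reidemeister moves, so that the convention $\Phi(K)=0$ when no numbering exists is itself move-invariant; and a move performed near the base is handled by first relocating $R_0$ away from its support, as permitted by the base-region computation above.

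The main obstacle is the surface-specific input of the third paragraph: verifying that the obstruction to a $(\mathrm{mod}\ p)$-Alexander numbering is a genuinely Reidemeister-invariant (homological) quantity and that the local numberings near a move are consistently forced, so that the planar cocycle computations transfer without change. By contrast, the base-region computation is formal once one uses that $T$ acts as an automorphism, and the passage to $(\mathrm{mod}\ p)$ is justified by the order-$p$ identity for $\phi$ recorded above.
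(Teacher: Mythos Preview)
The paper does not actually supply a proof of this theorem; it merely states the result after observing that $\Phi(K)$ depends on the choice of base region and that this is remedied by working modulo the action of $\langle T\rangle$. Your proposal is correct and fills in precisely the argument the paper leaves implicit: Reidemeister invariance is inherited from the classical case in Section~\ref{sec:twisted_biquandle_cocycle_invariants_of_classical_knots}, and a change of base region shifts every $\mathcal{L}(\tau)$ by a common constant, multiplying $\Phi(K)$ by a global power of $T$.

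You in fact go further than the paper on two points it glosses over. First, you justify why the $(\bmod\ p)$-Alexander numbering suffices, via the identity $T^{p}\phi(x,y)=\phi(f^{p}(x),f^{p}(y))=\phi(x,y)$ coming from $\phi$ being a $\mathbb{Z}[T,T^{-1}]$-module map and $f$ having order $p$; the paper simply imposes the $(\bmod\ p)$ numbering without comment. Second, you check that the convention $\Phi(K)=0$ when no $(\bmod\ p)$ numbering exists is itself Reidemeister-invariant, since the obstruction is homological. Both are genuine gaps in the paper's exposition that your argument closes.
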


\section*{Acknowledgement} 
ME is partially supported by Simons Foundation collaboration grant 712462.  MS is supported by the Fulbright-Nehru postdoctoral fellowship. MS also thanks the Department of Mathematics and Statistics at the University of South Florida for hospitality. The authors thank Masahico Saito and Scott Carter for fruitful discussions.

\begin{filecontents*}[overwrite]{references1.bib}

@ARTICLE{2023arXiv231205663E,
       author = {{Elhamdadi}, Mohamed and {Singh}, Manpreet},
        title = "{Colorings by biquandles and virtual biquandles}",
      journal = {arXiv e-prints},
     keywords = {Mathematics - Geometric Topology, Mathematics - Group Theory, 57K12, 57K10},
         year = 2023,
        month = dec,
          eid = {arXiv:2312.05663},
        pages = {arXiv:2312.05663},
          doi = {10.48550/arXiv.2312.05663},
archivePrefix = {arXiv},
       eprint = {2312.05663},
 primaryClass = {math.GT},
       adsurl = {https://ui.adsabs.harvard.edu/abs/2023arXiv231205663E},
      adsnote = {Provided by the SAO/NASA Astrophysics Data System}
}

@article {MR3190125,
    AUTHOR = {Ashihara, Sosuke},
     TITLE = {Fundamental biquandles of ribbon 2-knots and ribbon
              torus-knots with isomorphic fundamental quandles},
   JOURNAL = {J. Knot Theory Ramifications},
  FJOURNAL = {Journal of Knot Theory and its Ramifications},
    VOLUME = {23},
      YEAR = {2014},
    NUMBER = {1},
     PAGES = {1450001, 17},
      ISSN = {0218-2165,1793-6527},
   MRCLASS = {57Q45 (57M27)},
  MRNUMBER = {3190125},
MRREVIEWER = {David\ Stanovsk\'{y}},
       DOI = {10.1142/S0218216514500011},
       URL = {https://doi.org/10.1142/S0218216514500011},
}

@article {MR1990571,
    AUTHOR = {Carter, J. Scott and Jelsovsky, Daniel and Kamada, Seiichi and
              Langford, Laurel and Saito, Masahico},
     TITLE = {Quandle cohomology and state-sum invariants of knotted curves
              and surfaces},
   JOURNAL = {Trans. Amer. Math. Soc.},
  FJOURNAL = {Transactions of the American Mathematical Society},
    VOLUME = {355},
      YEAR = {2003},
    NUMBER = {10},
     PAGES = {3947--3989},
      ISSN = {0002-9947,1088-6850},
   MRCLASS = {57Q45 (57M27)},
  MRNUMBER = {1990571},
       DOI = {10.1090/S0002-9947-03-03046-0},
       URL = {https://doi.org/10.1090/S0002-9947-03-03046-0},
}

@article {MR1721925,
    AUTHOR = {Kauffman, Louis H.},
     TITLE = {Virtual knot theory},
   JOURNAL = {European J. Combin.},
  FJOURNAL = {European Journal of Combinatorics},
    VOLUME = {20},
      YEAR = {1999},
    NUMBER = {7},
     PAGES = {663--690},
      ISSN = {0195-6698},
   MRCLASS = {57M25 (57M27)},
  MRNUMBER = {1721925},
MRREVIEWER = {Olivier Collin},
       DOI = {10.1006/eujc.1999.0314},
       URL = {https://doi.org/10.1006/eujc.1999.0314},
}
@article {MR939474,
    AUTHOR = {Turaev, V. G.},
     TITLE = {The {Y}ang-{B}axter equation and invariants of links},
   JOURNAL = {Invent. Math.},
  FJOURNAL = {Inventiones Mathematicae},
    VOLUME = {92},
      YEAR = {1988},
    NUMBER = {3},
     PAGES = {527--553},
      ISSN = {0020-9910},
   MRCLASS = {57M25 (20F36 82A68)},
  MRNUMBER = {939474},
MRREVIEWER = {Toshitake Kohno},
       DOI = {10.1007/BF01393746},
       URL = {https://doi.org/10.1007/BF01393746},
}

@book {MR3013186,
    AUTHOR = {Kauffman, Louis H.},
     TITLE = {Knots and physics},
    SERIES = {Series on Knots and Everything},
    VOLUME = {53},
   EDITION = {Fourth},
 PUBLISHER = {World Scientific Publishing Co. Pte. Ltd., Hackensack, NJ},
      YEAR = {2013},
     PAGES = {xviii+846},
      ISBN = {978-981-4383-01-1},
   MRCLASS = {57-01 (00A79 57M25 57M27)},
  MRNUMBER = {3013186},
MRREVIEWER = {J. Scott Carter},
       DOI = {10.1142/8338},
       URL = {https://doi.org/10.1142/8338},
}

@book {MR1487374,
    AUTHOR = {Carter, J. Scott and Saito, Masahico},
     TITLE = {Knotted surfaces and their diagrams},
    SERIES = {Mathematical Surveys and Monographs},
    VOLUME = {55},
 PUBLISHER = {American Mathematical Society, Providence, RI},
      YEAR = {1998},
     PAGES = {xii+258},
      ISBN = {0-8218-0593-2},
   MRCLASS = {57N13},
  MRNUMBER = {1487374},
MRREVIEWER = {Colin\ C.\ Adams},
       DOI = {10.1090/surv/055},
       URL = {https://doi.org/10.1090/surv/055},
}

@article {MR1695171,
    AUTHOR = {Carter, J. Scott and Kamada, Seiichi and Saito, Masahico},
     TITLE = {Alexander numbering of knotted surface diagrams},
   JOURNAL = {Proc. Amer. Math. Soc.},
  FJOURNAL = {Proceedings of the American Mathematical Society},
    VOLUME = {128},
      YEAR = {2000},
    NUMBER = {12},
     PAGES = {3761--3771},
      ISSN = {0002-9939,1088-6826},
   MRCLASS = {57Q45 (57M27 57R20 57R42)},
  MRNUMBER = {1695171},
MRREVIEWER = {Lee\ Rudolph},
       DOI = {10.1090/S0002-9939-00-05479-4},
       URL = {https://doi.org/10.1090/S0002-9939-00-05479-4},
}

@article {MR2398735,
    AUTHOR = {Bartholomew, Andrew and Fenn, Roger},
     TITLE = {Quaternionic invariants of virtual knots and links},
   JOURNAL = {J. Knot Theory Ramifications},
  FJOURNAL = {Journal of Knot Theory and its Ramifications},
    VOLUME = {17},
      YEAR = {2008},
    NUMBER = {2},
     PAGES = {231--251},
      ISSN = {0218-2165,1793-6527},
   MRCLASS = {57M25 (57M27)},
  MRNUMBER = {2398735},
MRREVIEWER = {Seiichi\ Kamada},
       DOI = {10.1142/S021821650800604X},
       URL = {https://doi.org/10.1142/S021821650800604X},
}

@article {MR3597250,
    AUTHOR = {Bardakov, Valeriy G. and Mikhalchishina, Yuliya A. and
              Neshchadim, Mikhail V.},
     TITLE = {Representations of virtual braids by automorphisms and virtual
              knot groups},
   JOURNAL = {J. Knot Theory Ramifications},
  FJOURNAL = {Journal of Knot Theory and its Ramifications},
    VOLUME = {26},
      YEAR = {2017},
    NUMBER = {1},
     PAGES = {1750003, 17},
      ISSN = {0218-2165,1793-6527},
   MRCLASS = {57M27 (20F36 57M25)},
  MRNUMBER = {3597250},
MRREVIEWER = {Jie\ Wu},
       DOI = {10.1142/S0218216517500031},
       URL = {https://doi.org/10.1142/S0218216517500031},
}

@article {MR2493369,
    AUTHOR = {Bardakov, Valerij G. and Bellingeri, Paolo},
     TITLE = {Combinatorial properties of virtual braids},
   JOURNAL = {Topology Appl.},
  FJOURNAL = {Topology and its Applications},
    VOLUME = {156},
      YEAR = {2009},
    NUMBER = {6},
     PAGES = {1071--1082},
      ISSN = {0166-8641,1879-3207},
   MRCLASS = {20F36 (20F14 57M05)},
  MRNUMBER = {2493369},
MRREVIEWER = {Daan\ Krammer},
       DOI = {10.1016/j.topol.2008.10.002},
       URL = {https://doi.org/10.1016/j.topol.2008.10.002},
}

@article {MR3982044,
    AUTHOR = {Farinati, Marco A. and Garc\'{\i}a Galofre, Juliana},
     TITLE = {Virtual link and knot invariants from non-abelian
              {Y}ang-{B}axter 2-cocycle pairs},
   JOURNAL = {Osaka J. Math.},
  FJOURNAL = {Osaka Journal of Mathematics},
    VOLUME = {56},
      YEAR = {2019},
    NUMBER = {3},
     PAGES = {525--547},
      ISSN = {0030-6126},
   MRCLASS = {57K18 (57K10)},
  MRNUMBER = {3982044},
MRREVIEWER = {Heather A. Dye},
       URL = {https://projecteuclid.org/euclid.ojm/1563242423},
}

@article {MR2542696,
    AUTHOR = {Fenn, Roger},
     TITLE = {Biquandles and their application to virtual knots and links},
   JOURNAL = {J. Knot Theory Ramifications},
  FJOURNAL = {Journal of Knot Theory and its Ramifications},
    VOLUME = {18},
      YEAR = {2009},
    NUMBER = {6},
     PAGES = {785--789},
      ISSN = {0218-2165},
   MRCLASS = {57M27 (57M25)},
  MRNUMBER = {2542696},
       DOI = {10.1142/S0218216509007178},
       URL = {https://doi.org/10.1142/S0218216509007178},
}

@article {MR2100870,
    AUTHOR = {Fenn, Roger and Jordan-Santana, Mercedes and Kauffman, Louis},
     TITLE = {Biquandles and virtual links},
   JOURNAL = {Topology Appl.},
  FJOURNAL = {Topology and its Applications},
    VOLUME = {145},
      YEAR = {2004},
    NUMBER = {1-3},
     PAGES = {157--175},
      ISSN = {0166-8641},
   MRCLASS = {57M27 (57M25)},
  MRNUMBER = {2100870},
MRREVIEWER = {Dale P. O. Rolfsen},
       DOI = {10.1016/j.topol.2004.06.008},
       URL = {https://doi.org/10.1016/j.topol.2004.06.008},
}

@article {MR2191942,
    AUTHOR = {Kauffman, Louis H. and Manturov, Vassily O.},
     TITLE = {Virtual biquandles},
   JOURNAL = {Fund. Math.},
  FJOURNAL = {Fundamenta Mathematicae},
    VOLUME = {188},
      YEAR = {2005},
     PAGES = {103--146},
      ISSN = {0016-2736},
   MRCLASS = {57M25 (57M27)},
  MRNUMBER = {2191942},
MRREVIEWER = {Heather A. Dye},
       DOI = {10.4064/fm188-0-6},
       URL = {https://doi.org/10.4064/fm188-0-6},
}

@article {MR2128041,
    AUTHOR = {Carter, J. Scott and Elhamdadi, Mohamed and Saito, Masahico},
     TITLE = {Homology theory for the set-theoretic {Y}ang-{B}axter equation
              and knot invariants from generalizations of quandles},
   JOURNAL = {Fund. Math.},
  FJOURNAL = {Fundamenta Mathematicae},
    VOLUME = {184},
      YEAR = {2004},
     PAGES = {31--54},
      ISSN = {0016-2736},
   MRCLASS = {57M25 (55N35)},
  MRNUMBER = {2128041},
MRREVIEWER = {Seiichi Kamada},
       DOI = {10.4064/fm184-0-3},
       URL = {https://doi.org/10.4064/fm184-0-3},
}

@article {MR2515811,
    AUTHOR = {Ceniceros, Jose and Nelson, Sam},
     TITLE = {Virtual {Y}ang-{B}axter cocycle invariants},
   JOURNAL = {Trans. Amer. Math. Soc.},
  FJOURNAL = {Transactions of the American Mathematical Society},
    VOLUME = {361},
      YEAR = {2009},
    NUMBER = {10},
     PAGES = {5263--5283},
      ISSN = {0002-9947},
   MRCLASS = {57M27 (18G60)},
  MRNUMBER = {2515811},
       DOI = {10.1090/S0002-9947-09-04751-5},
       URL = {https://doi.org/10.1090/S0002-9947-09-04751-5},
}

@article {MR3835755,
    AUTHOR = {Przytycki, J\'{o}zef H. and Wang, Xiao},
     TITLE = {Equivalence of two definitions of set-theoretic
              {Y}ang-{B}axter homology and general {Y}ang-{B}axter homology},
   JOURNAL = {J. Knot Theory Ramifications},
  FJOURNAL = {Journal of Knot Theory and its Ramifications},
    VOLUME = {27},
      YEAR = {2018},
    NUMBER = {7},
     PAGES = {1841013, 15},
      ISSN = {0218-2165,1793-6527},
   MRCLASS = {16T25 (55N35 57M25)},
  MRNUMBER = {3835755},
MRREVIEWER = {Xiu\ Qin\ Sun},
       DOI = {10.1142/S0218216518410134},
       URL = {https://doi.org/10.1142/S0218216518410134},
}

@article {MR1885217,
    AUTHOR = {Carter, J. Scott and Elhamdadi, Mohamed and Saito, Masahico},
     TITLE = {Twisted quandle homology theory and cocycle knot invariants},
   JOURNAL = {Algebr. Geom. Topol.},
  FJOURNAL = {Algebraic \& Geometric Topology},
    VOLUME = {2},
      YEAR = {2002},
     PAGES = {95--135},
      ISSN = {1472-2747,1472-2739},
   MRCLASS = {57M27 (57Q45)},
  MRNUMBER = {1885217},
MRREVIEWER = {Seiichi\ Kamada},
       DOI = {10.2140/agt.2002.2.95},
       URL = {https://doi.org/10.2140/agt.2002.2.95},
}

@article {MR1017340,
	AUTHOR = {Jimbo, Michio},
	TITLE = {Introduction to the {Y}ang-{B}axter equation},
	JOURNAL = {Internat. J. Modern Phys. A},
	FJOURNAL = {International Journal of Modern Physics A. Particles and
		Fields. Gravitation. Cosmology},
	VOLUME = {4},
	YEAR = {1989},
	NUMBER = {15},
	PAGES = {3759--3777},
	ISSN = {0217-751X},
	MRCLASS = {82-02 (17B35 58F07 81E25 82A69)},
	MRNUMBER = {1017340},
	MRREVIEWER = {Satoru Saito},
	DOI = {10.1142/S0217751X89001503},
	URL = {https://doi.org/10.1142/S0217751X89001503},
}

@article {MR990215,
	AUTHOR = {Jones, V. F. R.},
	TITLE = {On knot invariants related to some statistical mechanical
		models},
	JOURNAL = {Pacific J. Math.},
	FJOURNAL = {Pacific Journal of Mathematics},
	VOLUME = {137},
	YEAR = {1989},
	NUMBER = {2},
	PAGES = {311--334},
	ISSN = {0030-8730},
	MRCLASS = {57M25 (82A69)},
	MRNUMBER = {990215},
	MRREVIEWER = {Louis H. Kauffman},
	URL = {http://projecteuclid.org/euclid.pjm/1102650387},
}

@article {MR908150,
	AUTHOR = {Jones, V. F. R.},
	TITLE = {Hecke algebra representations of braid groups and link
		polynomials},
	JOURNAL = {Ann. of Math. (2)},
	FJOURNAL = {Annals of Mathematics. Second Series},
	VOLUME = {126},
	YEAR = {1987},
	NUMBER = {2},
	PAGES = {335--388},
	ISSN = {0003-486X},
	MRCLASS = {46L99 (20F36 22D25 46L35 46L55 57M25)},
	MRNUMBER = {908150},
	MRREVIEWER = {Pierre de la Harpe},
	DOI = {10.2307/1971403},
	URL = {https://doi.org/10.2307/1971403},
}

@incollection {MR3381331,
	AUTHOR = {Przytycki, J\'{o}zef H.},
	TITLE = {Knots and distributive homology: from arc colorings to
		{Y}ang-{B}axter homology},
	BOOKTITLE = {New ideas in low dimensional topology},
	SERIES = {Ser. Knots Everything},
	VOLUME = {56},
	PAGES = {413--488},
	PUBLISHER = {World Sci. Publ., Hackensack, NJ},
	YEAR = {2015},
	MRCLASS = {57M27 (55U15 57T99)},
	MRNUMBER = {3381331},
	MRREVIEWER = {Heather A. Dye},
	DOI = {10.1142/9789814630627\_0011},
	URL = {https://doi.org/10.1142/9789814630627_0011},
}

@article {MR2191949,
	AUTHOR = {Fenn, Roger and Kauffman, Louis H. and Manturov, Vassily O.},
	TITLE = {Virtual knot theory---unsolved problems},
	JOURNAL = {Fund. Math.},
	FJOURNAL = {Fundamenta Mathematicae},
	VOLUME = {188},
	YEAR = {2005},
	PAGES = {293--323},
	ISSN = {0016-2736},
	MRCLASS = {57M25},
	MRNUMBER = {2191949},
	MRREVIEWER = {Charles Livingston},
	DOI = {10.4064/fm188-0-13},
	URL = {https://doi.org/10.4064/fm188-0-13},
}

@article {MR261870,
    AUTHOR = {Yang, C. N.},
     TITLE = {Some exact results for the many-body problem in one dimension
              with repulsive delta-function interaction},
   JOURNAL = {Phys. Rev. Lett.},
  FJOURNAL = {Physical Review Letters},
    VOLUME = {19},
      YEAR = {1967},
     PAGES = {1312--1315},
      ISSN = {0031-9007},
   MRCLASS = {81.20},
  MRNUMBER = {261870},
MRREVIEWER = {S. Deser},
       DOI = {10.1103/PhysRevLett.19.1312},
       URL = {https://doi.org/10.1103/PhysRevLett.19.1312},
}

@article {MR290733,
    AUTHOR = {Baxter, Rodney J.},
     TITLE = {Partition function of the eight-vertex lattice model},
   JOURNAL = {Ann. Physics},
  FJOURNAL = {Annals of Physics},
    VOLUME = {70},
      YEAR = {1972},
     PAGES = {193--228},
      ISSN = {0003-4916},
   MRCLASS = {82.46},
  MRNUMBER = {290733},
MRREVIEWER = {S. Sherman},
       DOI = {10.1016/0003-4916(72)90335-1},
       URL = {https://doi.org/10.1016/0003-4916(72)90335-1},
}

@article {MR672410,
    AUTHOR = {Matveev, S. V.},
     TITLE = {Distributive groupoids in knot theory},
   JOURNAL = {Mat. Sb. (N.S.)},
  FJOURNAL = {Matematicheski\u{\i} Sbornik. Novaya Seriya},
    VOLUME = {119(161)},
      YEAR = {1982},
    NUMBER = {1},
     PAGES = {78--88, 160},
      ISSN = {0368-8666},
   MRCLASS = {57M25 (20L15)},
  MRNUMBER = {672410},
MRREVIEWER = {Jonathan A. Hillman},
}

@article {MR638121,
    AUTHOR = {Joyce, David},
     TITLE = {A classifying invariant of knots, the knot quandle},
   JOURNAL = {J. Pure Appl. Algebra},
  FJOURNAL = {Journal of Pure and Applied Algebra},
    VOLUME = {23},
      YEAR = {1982},
    NUMBER = {1},
     PAGES = {37--65},
      ISSN = {0022-4049},
   MRCLASS = {57M25 (20F29 20N05 53C35)},
  MRNUMBER = {638121},
MRREVIEWER = {Mark E. Kidwell},
       DOI = {10.1016/0022-4049(82)90077-9},
       URL = {https://doi.org/10.1016/0022-4049(82)90077-9},
}

@incollection {MR1973514,
    AUTHOR = {Kauffman, Louis H. and Radford, David},
     TITLE = {Bi-oriented quantum algebras, and a generalized {A}lexander
              polynomial for virtual links},
 BOOKTITLE = {Diagrammatic morphisms and applications ({S}an {F}rancisco,
              {CA}, 2000)},
    SERIES = {Contemp. Math.},
    VOLUME = {318},
     PAGES = {113--140},
 PUBLISHER = {Amer. Math. Soc., Providence, RI},
      YEAR = {2003},
   MRCLASS = {57M25 (57M27)},
  MRNUMBER = {1973514},
       DOI = {10.1090/conm/318/05548},
       URL = {https://doi.org/10.1090/conm/318/05548},
}

@article {MR3868945,
    AUTHOR = {Kamada, Seiichi and Kawauchi, Akio and Kim, Jieon and Lee,
              Sang Youl},
     TITLE = {Biquandle cohomology and state-sum invariants of links and
              surface-links},
   JOURNAL = {J. Knot Theory Ramifications},
  FJOURNAL = {Journal of Knot Theory and its Ramifications},
    VOLUME = {27},
      YEAR = {2018},
    NUMBER = {11},
     PAGES = {1843016, 37},
      ISSN = {0218-2165},
   MRCLASS = {57M27 (57M25)},
  MRNUMBER = {3868945},
MRREVIEWER = {Akiko Shima},
       DOI = {10.1142/S0218216518430162},
       URL = {https://doi.org/10.1142/S0218216518430162},
}

@article {MR3042590,
    AUTHOR = {Nosaka, Takefumi},
     TITLE = {On quandle homology groups of {A}lexander quandles of prime
              order},
   JOURNAL = {Trans. Amer. Math. Soc.},
  FJOURNAL = {Transactions of the American Mathematical Society},
    VOLUME = {365},
      YEAR = {2013},
    NUMBER = {7},
     PAGES = {3413--3436},
      ISSN = {0002-9947},
   MRCLASS = {20J05 (55N35 57M25)},
  MRNUMBER = {3042590},
MRREVIEWER = {Heather A. Dye},
       DOI = {10.1090/S0002-9947-2013-05754-6},
       URL = {https://doi.org/10.1090/S0002-9947-2013-05754-6},
}

@article {MR1984465,
    AUTHOR = {Satoh, Shin and Shima, Akiko},
     TITLE = {The 2-twist-spun trefoil has the triple point number four},
   JOURNAL = {Trans. Amer. Math. Soc.},
  FJOURNAL = {Transactions of the American Mathematical Society},
    VOLUME = {356},
      YEAR = {2004},
    NUMBER = {3},
     PAGES = {1007--1024},
      ISSN = {0002-9947},
   MRCLASS = {57Q45 (57M25 57Q35)},
  MRNUMBER = {1984465},
MRREVIEWER = {Masahico Saito},
       DOI = {10.1090/S0002-9947-03-03181-7},
       URL = {https://doi.org/10.1090/S0002-9947-03-03181-7},
}

@article {MR2119028,
    AUTHOR = {Asami, Soichiro and Satoh, Shin},
     TITLE = {An infinite family of non-invertible surfaces in 4-space},
   JOURNAL = {Bull. London Math. Soc.},
  FJOURNAL = {The Bulletin of the London Mathematical Society},
    VOLUME = {37},
      YEAR = {2005},
    NUMBER = {2},
     PAGES = {285--296},
      ISSN = {0024-6093},
   MRCLASS = {57Q45 (57R42)},
  MRNUMBER = {2119028},
MRREVIEWER = {Nikolaos A. Askitas},
       DOI = {10.1112/S0024609304003832},
       URL = {https://doi.org/10.1112/S0024609304003832},
}

@article {MR1960136,
    AUTHOR = {Mochizuki, Takuro},
     TITLE = {Some calculations of cohomology groups of finite {A}lexander
              quandles},
   JOURNAL = {J. Pure Appl. Algebra},
  FJOURNAL = {Journal of Pure and Applied Algebra},
    VOLUME = {179},
      YEAR = {2003},
    NUMBER = {3},
     PAGES = {287--330},
      ISSN = {0022-4049},
   MRCLASS = {55N99 (57M27)},
  MRNUMBER = {1960136},
MRREVIEWER = {Alberto Cavicchioli},
       DOI = {10.1016/S0022-4049(02)00323-7},
       URL = {https://doi.org/10.1016/S0022-4049(02)00323-7},
}
	
\end{filecontents*}

\bibliography{references1}{}
\bibliographystyle{abbrv}
\end{document}